\DeclareMathOperator{\sign}{sgn}
\begin{document}
\selectlanguage{british}%
\global\long\def\intr{\int_{R}}
 \global\long\def\sbr#1{\left[ #1\right] }
 \global\long\def\cbr#1{\left\{  #1\right\}  }
 \global\long\def\rbr#1{\left(#1\right)}
 \global\long\def\ev#1{\mathbb{E}{#1}}
 \global\long\def\E{\mathbb{E}}
 \global\long\def\P{\mathbb{P}}
 \global\long\def\R{\mathbb{R}}
 \global\long\def\norm#1#2#3{\Vert#1\Vert_{#2}^{#3}}
 \global\long\def\pr#1{\mathbb{P}\rbr{#1}}
 \global\long\def\cleq{\lesssim}
 \global\long\def\ceq{\eqsim}
 \global\long\def\conv{\rightarrow}
 \global\long\def\Var#1{\text{Var}(#1)}
 \global\long\def\TDD{}
 \global\long\def\dd#1{\textnormal{d}#1}
 \global\long\def\inti{\int_{0}^{\infty}}
 \global\long\def\crr{\mathcal{C}([0;\infty),\R)}
 \global\long\def\sb#1{\langle#1\rangle}
 \global\long\def\pm#1{d_{P}\rbr{#1}}
 \global\long\def\crt{\mathcal{C}([0;T],\R)}
 \global\long\def\nuu{\nu_{n;\lambda}}
 \global\long\def\ZZ{Z_{\Lambda_{n}}}
 \global\long\def\PP{\mathbb{P}_{\Lambda_{n}}}
 \global\long\def\EE{\mathbb{E}_{\Lambda_{n}}}
 \global\long\def\LL{\Lambda_{n}}
 \global\long\def\AA{\mathcal{A}}
 \global\long\def\evx{\mathbb{E}_{x}}
 \global\long\def\pin#1{1_{\cbr{#1\in\mathcal{A}}}}
 \global\long\def\Zd{\mathbb{Z}^{d}}
 \global\long\def\ddp#1#2{\langle#1,#2\rangle}
 \global\long\def\intc#1{\int_{0}^{#1}}
 \global\long\def\T#1{\mathcal{P}_{#1}}
 \global\long\def\ii{\mathbf{i}}
 \global\long\def\star#1{\left.#1^{*}\right.}
 \global\long\def\pspace{\mathcal{C}}
 \global\long\def\eq{\varphi}
 \global\long\def\grad{\text{grad}}
 \global\long\def\var{\text{var}}
 \global\long\def\ab{[a,b]}
 \global\long\def\ra{\rightarrow}
 \global\long\def\TTV#1#2#3{\text{TV}^{#3}\!\rbr{#1,#2}}
 \global\long\def\V#1#2#3{\text{V}^{#3}\!\rbr{#1,#2}}
 \global\long\def\vi#1#2{\text{vi}\rbr{#1,#2}}
 \global\long\def\eqdef{:=}
 \global\long\def\UTV#1#2#3{\text{UTV}^{#3}\!\rbr{#1,#2}}
 \global\long\def\DTV#1#2#3{\text{DTV}^{#3}\!\rbr{#1,#2}}
 \global\long\def\ns{\infty}
 \global\long\def\f{:\left[a,b\right]\ra\R}
 \global\long\def\TV{\text{TV}}
 \global\long\def\osc{\text{osc}}
 \global\long\def\calu{{\cal U}^{p}\left[a,b\right]}
 \global\long\def\cont{\text{cont}}
\global\long\def\barP{\mathbb{\bar{P}}}

\newtheorem{theorem}{Theorem} 
\newtheorem{prop}[theorem]{Proposition} 
\newtheorem{lem}[theorem]{Lemma}
\newtheorem{rem}[theorem]{Remark} 
\newtheorem{defi}[theorem]{Definition}
\newtheorem{coro}[theorem]{Corollary}

\newenvironment{proof}{\par\noindent{\bf Proof.}}{\par\rightline{$\blacksquare$}}

\title{On tails of symmetric and totally asymmetric $\alpha$-stable distributions }

\author{Witold M. Bednorz\footnote{University of Warsaw}, Rafa{\l} M. \L ochowski\footnote{Warsaw School of Economics and the University of Warsaw}  and Rafa{\l} Martynek \footnote{University of Warsaw} \footnote{The research of all authors was funded by the National Science Centre, Poland, under Grant
No. 2016/21/B/ST1/0148.}}

\date{}
\maketitle

\abstract{We estimate  up
to universal constants tails of symmetric and totally asymmetric 1-dimensional $\alpha$-stable distributions in terms of functions of the parameters of these distributions. In particular, for values of $\alpha$ close to $2$ we specify where exactly the tail changes from being Gaussian and starts to behave like in the Pareto distribution}

\section{Introduction}

A random variable $X$ is called (one-dimensional) stable if for any numbers $a,b>0$ and $X_{1}, X_{2}$\textendash  independent copies of $X$ there exist numbers $c(a,b)$ and $d(a,b)$ such that $$aX_{1}+bX_{2}\overset{d}{=}c(a,b)X+d(a,b).$$ Random variables of this type constitute an important family used in stochastic modelling. Let us recall some fundamental properties of stable distributions. For the comprehensive study see e.g. \cite{SamorodnitskyTaqqu:1994}. It is a classic result that $c(a,b)$ is of the form $(a^{\alpha}+b^{\alpha})^{\frac{1}{\alpha}}$ for $\alpha\in(0,2]$. Number $\alpha$ is sometimes called index of stability and a stable random variable with index $\alpha$ is called $\alpha$-stable. For $\alpha\neq1$ the characteristic function of $X$ is given by
$$\E\exp(itX)=\exp\left(-\sigma^{\alpha}|t|^{\alpha}\left( 1-i\beta\sign{(t)}\tan\left(\frac{\pi\alpha}{2}\right)\right)+i\mu t\right),$$
while for $\alpha=1$ the characteristic function is given by
$$\E\exp(itX)=\exp\left(-\sigma|t|\left( 1+i\beta\frac{2}{\pi}\sign{(t)}\log(|t|)\right)+i\mu t\right),$$
where $\sigma>0, \beta\in[-1,1]$ and $\mu\in\R$ is a shift parameter. $\beta$ is a skewness (asymmetry) parameter, while $\sigma$ is a scale parameter. The case $\beta=0$ refers to the symmetric case and $\beta=-1$ or $\beta=1$ refer to totally asymmetric case. 
When $\mu=0$ we call $X$ a strictly $\alpha$-stable random variable, in which case the characteristic function can be represented as 
\begin{equation}\label{char}
\E\exp(itX)=\exp\left(\int_{\R}\psi(t,x)\nu(dx)\right),
\end{equation}
where $$\psi(t,x)=\begin{cases}e^{itx}-1 \quad &\text{if}\quad \alpha\in(0,1),\\
  e^{itx} -1-itx \quad &\text{if}\quad \alpha\in(1,2) 
\end{cases}$$ and $\nu$ is called a L\'evy measure given by $$\nu(dx)=\frac{C_{1}}{x^{\alpha+1}}\mathbbm{1}_{(0,\infty)}(x)dx+\frac{C_{2}}{|x|^{\alpha+1}}\mathbbm{1}_{(-\infty,0)}(x)dx,$$
where $C_1,C_2\geq0$ and $C_1+C_2>0$. The relation between $C_1, C_2$ and $\beta$ is given by the equation $\beta=\frac{C_{1}-C_{2}}{C_{1}+C_{2}}$. In particular, for the symmetric case we take $C_1=C_2=1$ and for totally asymmetric case $C_1=1$ and $C_2=0$. Moreover, a dependence of the scale parameter $\sigma$ on the parameter $\alpha$ and constants $C_1$ and $C_2$ is given by $\sigma^{\alpha}=\Gamma(-\alpha)\cos\left( \frac{(2-\alpha)\pi}{2} \right)(C_1+C_2),$ where $\Gamma$ denotes the gamma function. 

\noindent 
There are usually no closed formulas for densities and distribution functions of stable distributions. The exception being the case of Gaussian distribution ($\alpha=2, \beta=0$), Cauchy distribution ($\alpha=1$, $\beta=0$) and L\'evy distribution ($\alpha=\frac{1}{2}$, $\beta=1$). To deal with the lack of explicit densities for other cases the series expansions were established, see \cite{Zolotariev:1986} and \cite[Chapt. XVII, Sect. 6]{Feller:1971cr}. For $\sigma=1, \beta=0$ there is a following series expansion of the density function of $X$. For $\alpha\in(0,1)$ $$f_{X}(x)=\frac{1}{\pi}\sum_{n\geq1}\frac{(-1)^{n+1}}{n!}\Gamma(n\alpha+1)\sin\left(\frac{n\alpha}{2}\right)\frac{1}{x^{n\alpha+1}}.$$
For $\alpha\in(1,2]$ (\cite[Chapt. IV, Sect. 1]{Zolotarev})
    $$f_{X}(x)=\frac{1}{\alpha\pi}\sum_{n\geq1}\frac{(-1)^{n}}{2n!}\Gamma\left(\frac{2n+1}{\alpha}\right)x^{2n}.$$
Tail asymptotics of $\alpha$-stable distributions are well-known \cite[Property 1.2.15]{SamorodnitskyTaqqu:1994}. For $\alpha\in(0,2)$ 

$$\lim_{y\rightarrow\ +\infty}y^{\alpha}\P(X\geq y)=C_{\alpha}\frac{1+\beta}{2}\sigma^{\alpha}$$and
$$\lim_{y\rightarrow\ -\infty}|y|^{\alpha}\P(X\leq y)=C_{\alpha}\frac{1-\beta}{2}\sigma^{\alpha},$$where
$$C_{\alpha}=\left(\int_{0}^{\infty}x^{\alpha}\sin x dx\right)^{-1}=\frac{1}{\alpha\Gamma(-\alpha)\cos\left(\frac{(2-\alpha)\pi}{2}\right)}.$$
Observe that
$$C_{\alpha}=
\begin{cases}
1+o(1)&\text{if}\quad \alpha\rightarrow0^{+}\\
\frac{2}{\pi}(1+o(1))&\text{if}\quad\alpha\rightarrow1\\
(2-\alpha)(1+o(1))&\text{if}\quad\alpha\rightarrow2^{-}.
\end{cases}$$
For $\beta=-1$, in which case $\lim_{y\rightarrow\ +\infty}y^{\alpha}\P(X\geq y)=0$, so the rate of convergence of $\P(X\geq y)$ to $0$ is faster than $\frac{1}{y^{\alpha}}$. It is known \cite[(1.2.11)]{SamorodnitskyTaqqu:1994} that the right rate of convergence  for $\alpha\in(1,2)$ is given by $$
\frac{1+o(1)}{\sqrt{2\alpha\pi(\alpha-1})}\left(\frac{|y|}{\kappa_{\alpha}}\right)^{-\frac{\alpha}{2(\alpha-1)}}\exp\left(-(\alpha-1)\left(\frac{|y|}{\kappa_{\alpha}}\right)^{\frac{-\alpha}{\alpha-1}}\right),\\$$
where$\quad \kappa_{\alpha}=\frac{\alpha\sigma}{\cos\left(\frac{(2-\alpha)\pi}{2}\right)}$
and exactly the same for the left tails in the case of $\beta=1$. Recall that for $\alpha<1$ and $\beta=1$, $\P(X<0)=0$.\\

\noindent There is a rich literature on numerical calculation of stable densities and distribution
functions, see for example \cite{Nolan:1997} and references therein.
In this article we are interested in 'qualitative' behavior of tails of
symmetric and totally asymmetric $\alpha$-stable distributions.
More precisely, we are interested in the description of these
tails in terms of functions of the parameters of a distribution up
to universal constants.\\
\noindent Let $X$ be an $\alpha$-stable random variable.
As presented above, the asymptotic behavior of $\P\left(|X|>t\right)$ as $t\ra+\ns$ is
fully understood, but the value of the tail $\P\left(X>t\right)$
for moderate values of $t$ seems to be not well investigated. The study of densities of $\alpha$-stable distributions goes back to P\'olya \cite{Polya} as well as Blumenthal and Getoor \cite{Blumenthal}. Upper bounds for densities of the multidimensional $\alpha$-stable random variables were given in the work of Watanabe \cite{Watanabe:2007}. The classic work by W.E. Pruitt \cite{Pruitt1981} provides estimates for the tails of suprema of L\'{e}vy processes. The idea of truncating the characteristic function used both in \cite{Pruitt1981} and \cite{Watanabe:2007} is also applied in this work.  Some of the results presented here can be related to much more general work of T. Grzywny et.al \cite{Grzywny:2014} where estimates for densities were delivered toghether with explicit constants \cite{Grzywny':2014}, which are however of rather intricate form.  Also, upper bounds for $\beta\neq0$ can be found in \cite{Sztonyk}, while lower bounds for $|\beta|\neq1$ in \cite{Grzywny}.\\
\noindent The value of the results presented here lies mainly in the transparency of constants in estimates, which, as believed, were not explicitly presented so far. Also the approach based on elementary techniques might be of independent interest especially since it outlines the nature of alpha-stable variables whose tails are determined by the analysis of heavy-tailed jumps. The main novelty to the results in \cite{Grzywny:2014} is that we also consider strictly asymmetric case ($\beta=1$). Finally, calculations we provide for $\alpha$ close to $2$ allow to establish the order of boundary value at which the tail of $\alpha$-stable random variable alters from behaving like a Gaussian and starts to resemble a tail of Pareto distribution (see Remark \ref{alter}). \\

\noindent \textbf{Acknowledgements.} The authors wish to thank the anonymous referee whose notes helped to fix some bugs in the first draft of the manuscript.

\section{Methods}
Our approach is based on the analysis of the series representation of $\alpha$-stable random variables as well as their characteristic functions. First, we present  a classic series representation (see \cite{SamorodnitskyTaqqu:1994} section 1.4). \\
\noindent
Let $(\tau_{i})_{i\geq1}$ be a sequence of arrival times of Poissonian process with parameter 1 i.e. $\tau_{i}=\Gamma_{1}+\dots+\Gamma_{i}$, where the sequence  $(\Gamma_{k})_{k\geq1}$ is i.i.d. and for $u\geq0$, $\mathbb{P}(\Gamma_{k}\geq u)=e^{-u}$, then 
\begin{itemize}
    \item ($\alpha\in(0,1)$, $\beta=1$) $X\overset{d}{=}\sum_{i=1}^{\infty}(\alpha\tau_{i})^{-\frac{1}{\alpha}},$
    \item ($\alpha\in(0,1)$ and $\alpha\in(1,2)$, $\beta=0$) $X\overset{d}{=}\left(\frac{\alpha}{2}\right)^{-\frac{1}{\alpha}}\sum_{i=1}^{\infty}\varepsilon_{i}\tau_{i}^{-\frac{1}{\alpha}}$, (where $\varepsilon_{i}$ are independent Rademacher random variables),
    \item ($\alpha\in(1,2)$, $\beta=1$) $X\overset{d}{=}c_{\alpha}\sum_{i=1}^{\infty}\left(\tau_{i}^{-\frac{1}{\alpha}}-a_{i}\right)$, for an $\alpha$-dependent costant $c_\alpha$ and compensating terms $a_i$ given by $a_i=\frac{\alpha-1}{\alpha}\left(i^{\frac{\alpha-1}{\alpha}}-(i-1)^{\frac{\alpha-1}{\alpha}}\right)$. 
\end{itemize}
Series representations are particularly useful for simulations (see e.g. \cite{Rosinskisym}). Also, it is worth mentioning that a more general class of infinitely divisible processes admit a similar representation to the above known as Rosinski's representation \cite{Rosinskirep}.
Working with this representation turns out to be efficient when estimating tails of both symmetric and asymmetric $\alpha$-stable random variables for $\alpha\in(0,1)$. The proof of convergence of the above series can be found in \cite{SamorodnitskyTaqqu:1994}. To verify that the above series representations are right one needs to simply calculate the characteristic function of $X$ in each case and check that it obeys the definition (\ref{char}). The following two lemmas might serve as a tool in it and also will be helpful in further calculations. 
\begin{lem}\label{lem:expect}
Consider a Borel function $f:\mathbb{R}^{+}\rightarrow\mathbb{R}^{+}$ with $\int_{0}^{\infty} f(x)dx<\infty$. Then, $\mathbb{E}\sum_{i=1}^{\infty}f(\tau_{i})=\int_0^{\infty} f(x)dx.$
\end{lem}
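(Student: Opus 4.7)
The plan is to use the explicit (Gamma) density of the arrival times $\tau_i$, then exchange summation and integration using nonnegativity. Since $\tau_i = \Gamma_1 + \cdots + \Gamma_i$ is a sum of $i$ i.i.d. $\text{Exp}(1)$ random variables, it is well known that $\tau_i$ has density
$$f_{\tau_i}(x) = \frac{x^{i-1}e^{-x}}{(i-1)!}\,\mathbbm{1}_{(0,\infty)}(x),$$
i.e.\ it follows a Gamma distribution with shape $i$ and rate $1$. (One verifies this by induction, convolving the $\text{Exp}(1)$ density with the density of $\tau_{i-1}$, or equivalently by a direct characteristic function calculation.)

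Next, I would compute each term:
$$\mathbb{E} f(\tau_i) = \int_{0}^{\infty} f(x)\,\frac{x^{i-1}e^{-x}}{(i-1)!}\,\dd x,$$
and then sum over $i$. Since $f \ge 0$, Tonelli's theorem allows me to interchange the order of summation and integration without justification beyond nonnegativity, giving
$$\mathbb{E} \sum_{i=1}^{\infty} f(\tau_i) = \int_{0}^{\infty} f(x)\,e^{-x} \sum_{i=1}^{\infty}\frac{x^{i-1}}{(i-1)!}\,\dd x = \int_{0}^{\infty} f(x)\,e^{-x}\,e^{x}\,\dd x = \int_{0}^{\infty} f(x)\,\dd x.$$
The assumption $\int_{0}^{\infty} f(x)\,\dd x < \infty$ then ensures that the resulting expectation is finite (so the almost sure convergence of the series $\sum_i f(\tau_i)$ is automatic).

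There is no real obstacle: the only step that requires a line of justification is the sum/integral swap, which is free because $f \ge 0$. Conceptually, the identity is just Campbell's formula for the Poisson point process on $(0,\infty)$ with unit intensity, whose atoms are precisely $\{\tau_i\}_{i\ge 1}$; the computation above is a self-contained derivation of Campbell's formula in this concrete setting, avoiding reliance on Poisson process machinery.
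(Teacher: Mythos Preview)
Your proof is correct and follows essentially the same approach as the paper: both invoke the Erlang (Gamma) density $\frac{x^{i-1}e^{-x}}{(i-1)!}$ of $\tau_i$ and use nonnegativity of $f$ to justify interchanging the sum and the expectation, after which the exponential series collapses with $e^{-x}$. Your write-up is simply more explicit (spelling out the Tonelli step and the series identity, and noting the connection to Campbell's formula), but the underlying argument is the same.
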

\begin{proof}
It is a consequence of the fact that for each $i\geq 1$,   $\tau_{i}$ has the Erlang distribution i.e. its' density function is given by $\frac{x^{i-1}e^{-x}}{(i-1)!}$, where $x\geq0$. Since $f$ is non-negative and integrable we can put the summation outside the expectation. The result then follows easily. 
\end{proof}
The second lemma uses equivalence between Poissonian arrival times and Poissonian point processes and we omit the proof of it.

\begin{lem}[{\cite[ Lemma 11.3.3]{Tal}}]\label{lem:char}
For any $a>0$ and a continuous function $f:\mathbb{R}^{+}\rightarrow\mathbb{C}$ it holds that $\mathbb{E}\prod_{\tau_{i}<a}f(\tau_{i})=\exp\left(-\int_{0}^{a}(1-f(x))dx\right).$
\end{lem}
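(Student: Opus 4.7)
The plan is to exploit the fact that $\{\tau_i\}_{i\geq 1}$, being the arrival times of a rate-1 Poisson process, constitutes a Poisson point process on $(0,\infty)$ with intensity equal to Lebesgue measure. I would therefore condition on the number of points falling in $[0,a]$ and use the classical representation of a conditioned Poisson process as order statistics of uniform random variables.

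First, I would set $N_a \eqdef \#\{i\geq 1 : \tau_i < a\}$ and recall that $N_a$ has the Poisson distribution with mean $a$, so $\mathbb{P}(N_a = n) = e^{-a}a^n/n!$. Next, I would invoke the standard fact that, conditionally on $\{N_a = n\}$, the unordered collection $\{\tau_1,\dots,\tau_n\}$ has the same distribution as $n$ i.i.d.\ uniform random variables $U_1,\dots,U_n$ on $[0,a]$. Since the product $\prod_{\tau_i<a} f(\tau_i)$ is a symmetric function of its arguments, this gives
\begin{equation*}
\mathbb{E}\Bigl[\prod_{\tau_i<a}f(\tau_i)\,\Big|\,N_a = n\Bigr]
= \mathbb{E}\Bigl[\prod_{j=1}^{n}f(U_j)\Bigr]
= \Bigl(\frac{1}{a}\int_0^a f(x)\,dx\Bigr)^{n},
\end{equation*}
with the convention that the empty product equals $1$ when $n=0$. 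Summing against the Poisson weights yields
\begin{equation*}
\mathbb{E}\prod_{\tau_i<a}f(\tau_i)
= \sum_{n=0}^{\infty} \frac{a^n e^{-a}}{n!}\Bigl(\frac{1}{a}\int_0^a f(x)\,dx\Bigr)^{n}
= e^{-a}\exp\Bigl(\int_0^a f(x)\,dx\Bigr)
= \exp\Bigl(-\int_0^a (1-f(x))\,dx\Bigr),
\end{equation*}
which is the claim.

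I do not expect any serious obstacle here. The only mild technical point is to justify that the random product is integrable so that the interchange of expectation and sum is legal. This follows from continuity of $f$ on the compact set $[0,a]$: the product is a finite product (of length $N_a$) of terms bounded by $M \eqdef \sup_{x\in[0,a]}|f(x)|<\infty$, so $\bigl|\prod_{\tau_i<a}f(\tau_i)\bigr|\leq M^{N_a}$, which has finite expectation since $N_a$ is Poisson. An alternative, essentially equivalent derivation is to recognise the identity as the Laplace (or multiplicative) functional of a Poisson point process with intensity $\mathbf{1}_{[0,a]}\,dx$, but the conditioning argument above is the most self-contained route.
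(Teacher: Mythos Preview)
Your argument is correct: conditioning on $N_a$, using the uniform order-statistics representation of the points, and summing the Poisson series is the standard and complete derivation, and your integrability check via $M^{N_a}$ is the right justification for the interchange. The paper itself does not give a proof of this lemma; it simply cites it from Talagrand and remarks that it rests on the equivalence between Poisson arrival times and Poisson point processes, which is precisely what your conditioning argument makes explicit.
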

With the above properties the calculations of the characteristic function for the asymmetric case and $\alpha\in(0,1)$ are straightforward, while in the symmetric case it suffices to notice that the characteristic function can be expressed as
\begin{eqnarray*}
\E e^{itX} & = & \exp\left(\int_{0}^{\infty}\left(e^{itx}-1-itx\right)\frac{dx}{x^{\alpha+1}}+\int_{-\infty}^{0}\left(e^{itx}-1-itx\right)\frac{dx}{|x|^{\alpha+1}}\right)\\
& = & \exp\left(\int_{0}^{\infty}\left(e^{itx}-1-itx+e^{-itx}-1+itx\right)\frac{dx}{x^{\alpha+1}}\right)\\
& = &\exp\left(2\int_{0}^{\infty}(\cos(tx)-1)\frac{dx}{x^{\alpha+1}}\right).
\end{eqnarray*}
The main trick used when dealing with totally asymmetric case for $\alpha\in(0,1)$ is conditioning the series $\sum_{i=1}^{\infty}(\alpha\tau_{i})^{-\frac{1}{\alpha}}$ on the first term. To this end we observe that $\sum_{i=1}^{\infty}(\alpha\tau_{i})^{-\frac{1}{\alpha}}$ can be rewritten as 
$(\alpha\tau_{1})^{-\frac{1}{\alpha}}+\sum_{i=1}^{\infty}\alpha^{-\frac{1}{\alpha}}(\tau_{1}+\tilde{\tau_{i}})^{-\frac{1}{\alpha}},$
where for $i \geq 1$ we define 
\begin{equation}\label{tilde}
\tilde{\tau}_i = \tau_{i+1} - \tau_1.
\end{equation}
We notice that $\tilde{\tau}_i \overset{d}{=} \tau_i$ and $\tilde{\tau}_i$, $i \ge 1$, are independent from $\tau_i$.
For $x>0$ define the series 
$$S(x)=\sum_{i=1}^{\infty}\alpha^{-\frac{1}{\alpha}}(x+\tilde{\tau_i})^{-\frac{1}{\alpha}}.$$It is well-defined. Notice that $S(x)$ is decreasing. With the use of Lemma \ref{lem:char} we calculate moments of $S(x)$. 

\begin{lem}{\label{mom}}
The moment generating function of $S(x)$ is given by $$\Lambda_{S(x)}(\lambda)=\E\exp(\lambda S(x))=\exp(-f(\lambda,x)),\quad \lambda\geq0,$$
where
\begin{equation}
    f(\lambda,x)=\int_0^\infty1-\exp\left(\alpha^{-1/\alpha}\lambda(x+y)^{-1/\alpha}\right)dy.
\end{equation}
\end{lem}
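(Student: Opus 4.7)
The plan is to apply Lemma \ref{lem:char} with a well-chosen function and then pass to the limit as the truncation level tends to infinity. Since $(\tilde{\tau}_i)_{i\ge 1}\overset{d}{=}(\tau_i)_{i\ge 1}$ as sequences, it is enough to compute the moment generating function of $\sum_{i=1}^{\infty}\alpha^{-1/\alpha}(x+\tau_i)^{-1/\alpha}$. For $\lambda\le 0$ set
\[
g_{\lambda}(y)=\exp\!\rbr{\lambda\,\alpha^{-1/\alpha}(x+y)^{-1/\alpha}},\qquad y\ge 0,
\]
which takes values in $(0,1]$. Then
\[
\exp(\lambda S(x))=\prod_{i=1}^{\infty}g_{\lambda}(\tilde{\tau}_i).
\]

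For every $a>0$, Lemma \ref{lem:char} yields
\[
\E\prod_{\tau_i<a}g_{\lambda}(\tau_i)=\exp\!\rbr{-\int_{0}^{a}\rbr{1-g_{\lambda}(y)}\,dy}.
\]
As $a\uparrow\infty$ the finite product on the left is a non-increasing family of $(0,1]$-valued random variables converging pointwise to $\prod_{i=1}^{\infty}g_{\lambda}(\tau_i)$, so by dominated convergence the left-hand side converges to $\E\exp(\lambda S(x))$. On the right-hand side, since $1-g_{\lambda}(y)\ge 0$, the integral is monotonically non-decreasing in $a$ and converges to $f(\lambda,x)\in[0,\infty]$.

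It remains to verify that $f(\lambda,x)<\infty$, which will simultaneously guarantee that $S(x)<\infty$ almost surely and that the identity makes sense. Near $y=0$ the integrand is bounded by $1$, so the contribution is at most $x$ if we split at $y=x$. For $y\to\infty$, using $1-e^{-u}\le u$ for $u\ge 0$,
\[
1-g_{\lambda}(y)\le |\lambda|\,\alpha^{-1/\alpha}(x+y)^{-1/\alpha},
\]
and since $\alpha\in(0,1)$ gives $1/\alpha>1$, the tail $\int^{\infty}(x+y)^{-1/\alpha}dy$ converges. Hence $f(\lambda,x)<\infty$ and the claimed formula follows.

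The only delicate point is the passage to the limit: the fact that both sides are monotone in $a$ (the product non-increasing in the number of factors because each $g_{\lambda}(\tau_i)\in(0,1]$, and the integral non-decreasing because its integrand is non-negative) makes the limiting argument essentially free once integrability at infinity is checked, and this last check is exactly where the restriction $\alpha<1$ enters crucially.
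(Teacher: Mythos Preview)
Your proof is correct and follows essentially the same approach as the paper: apply Lemma~\ref{lem:char} to the truncated product $\prod_{\tau_i<a}g_\lambda(\tau_i)$, then let $a\to\infty$ and verify that $f(\lambda,x)<\infty$ via the bound $1-e^{-u}\le u$ together with integrability of $(x+y)^{-1/\alpha}$ for $\alpha\in(0,1)$. Your justification of the limit (monotonicity of both sides plus dominated convergence) is in fact a bit more carefully stated than the paper's, but the argument is the same.
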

\begin{proof}
Let $a>0$. Then, by Lemma \ref{lem:char},
\begin{eqnarray*}
\E\exp\left(\lambda\sum_{\tilde{\tau_i}<a}\alpha^{-1/\alpha}(x+\tilde{\tau_i})^{-1/\alpha}\right)=\E\prod_{\tilde{\tau_i}<a}\exp\left(\lambda\alpha^{-1/\alpha}(x+\tilde{\tau_i})^{-1/\alpha}\right)\\
=\exp\left(-\int_0^a1-\exp\left(\alpha^{-1/\alpha}\lambda(x+y)^{-1/\alpha}\right)dy\right).
\end{eqnarray*}
Passing on both sides to the limit as $a\rightarrow\infty$ is allowed since $S(x)$ is a convergent series and the integral on the right-hand side stays finite. To see this we use inequality $1-e^{u}\geq-2u$ for small, positive $u$. Consider sufficiently large constant $y_0$ and the quantity $I_{y_0}=\int_0^{y_0}1-\exp\left(\alpha^{-1/\alpha}\lambda(x+y)^{-1/\alpha}\right)dy$, which is bounded. Then,  
\begin{eqnarray*}
f(\lambda,x)&=&I_{y_0}+\int_{y_0}^\infty1-\exp\left(\alpha^{-1/\alpha}\lambda(x+y)^{-1/\alpha}\right)dy\\
&\geq& I_{y_0} -2\lambda\alpha^{-1/\alpha}\int_{y_0}^\infty(x+y)^{-1/\alpha}dy\\
&=&I_{y_0}-2\lambda\frac{(\alpha (x+y_0))^{1-1/\alpha}}{1-\alpha}>-\infty.
\end{eqnarray*}
\end{proof}
Therefore we can calculate any moment of $S(x)$. In particular, we have the following result. Obviously it could be also deduced from Lemma \ref{lem:expect}.
\begin{lem}{\label{war}}
With the above notation we have for $\alpha\in(0,1)$ that $$\E(S(x))=\frac{(\alpha x)^{1-\frac{1}{\alpha}}}{1-\alpha}
\qquad\mbox{and}\qquad \Var (S(x))=\frac{(\alpha x)^{1-\frac{2}{\alpha}}}{2-\alpha}.$$
\end{lem}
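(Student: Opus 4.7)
My plan is to derive both moments from the moment generating function provided by Lemma \ref{mom}, using the standard fact that $\E S(x)=\Lambda_{S(x)}'(0)$ and $\E S(x)^2=\Lambda_{S(x)}''(0)$, then form the variance as $\Lambda_{S(x)}''(0)-(\Lambda_{S(x)}'(0))^2$. (An alternative, which the remark in the statement alludes to, is to apply Lemma \ref{lem:expect} to $f(y)=\alpha^{-1/\alpha}(x+y)^{-1/\alpha}$ for the mean, and to $f(y)^2$ plus a Campbell-type identity for the variance; the mgf route handles both simultaneously.)

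Concretely, I would first differentiate $f(\lambda,x)=\int_{0}^{\infty}\bigl(1-\exp(\alpha^{-1/\alpha}\lambda(x+y)^{-1/\alpha})\bigr)\dd y$ under the integral sign with respect to $\lambda$. One derivative yields
$$\partial_{\lambda}f(\lambda,x)=-\int_{0}^{\infty}\alpha^{-1/\alpha}(x+y)^{-1/\alpha}\exp(\alpha^{-1/\alpha}\lambda(x+y)^{-1/\alpha})\dd y,$$
and a second derivative gives an analogous expression with an extra factor $\alpha^{-1/\alpha}(x+y)^{-1/\alpha}$. Evaluating at $\lambda=0$ leaves the two pure power integrals $\int_{0}^{\infty}\alpha^{-1/\alpha}(x+y)^{-1/\alpha}\dd y$ and $\int_{0}^{\infty}\alpha^{-2/\alpha}(x+y)^{-2/\alpha}\dd y$, which converge because $\alpha\in(0,1)$ forces $1/\alpha>1$ and $2/\alpha>1$, and which evaluate by elementary antiderivatives to $(\alpha x)^{1-1/\alpha}/(1-\alpha)$ and $(\alpha x)^{1-2/\alpha}/(2-\alpha)$ respectively.

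Now since $\Lambda_{S(x)}(\lambda)=\exp(-f(\lambda,x))$ with $f(0,x)=0$, the chain rule gives $\Lambda_{S(x)}'(0)=-\partial_{\lambda}f(0,x)$ and $\Lambda_{S(x)}''(0)=(\partial_{\lambda}f(0,x))^{2}-\partial_{\lambda}^{2}f(0,x)$. Substituting the two integrals computed above produces
$$\E S(x)=\frac{(\alpha x)^{1-1/\alpha}}{1-\alpha},\qquad \E S(x)^{2}=\bigl(\E S(x)\bigr)^{2}+\frac{(\alpha x)^{1-2/\alpha}}{2-\alpha},$$
and the variance follows immediately by subtraction.

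The only technical point is justifying the differentiation under the integral and the identity $\E S(x)^k=\Lambda_{S(x)}^{(k)}(0)$ for $k=1,2$; this is where I would spend care. Since the derivatives of $1-\exp(\alpha^{-1/\alpha}\lambda(x+y)^{-1/\alpha})$ in $\lambda$ are, for $\lambda$ in a neighbourhood $[-\epsilon,0]$ and $y>0$, dominated in absolute value by $\alpha^{-k/\alpha}(x+y)^{-k/\alpha}$ (times a bounded factor), and the latter is integrable for $k=1,2$ as noted, dominated convergence applies. The passage from derivatives of the Laplace transform at $0$ to moments of the non-negative random variable $S(x)$ is standard under the same integrability, so no real obstacle is expected; the computation is essentially routine once the mgf from Lemma \ref{mom} is in hand.
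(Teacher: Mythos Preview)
Your approach is essentially identical to the paper's: both differentiate the exponent $f(\lambda,x)$ from Lemma~\ref{mom} under the integral sign, evaluate at $\lambda=0$ to obtain the two power integrals, and then use the chain rule on $\Lambda_{S(x)}=\exp(-f)$ to read off $\E S(x)=-f'(0)$ and $\mathrm{Var}(S(x))=-f''(0)$. You are in fact slightly more careful than the paper in justifying the differentiation under the integral and the one-sided nature of the derivative (since the mgf is only given for $\lambda\le 0$).
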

\begin{proof}
Fix $x>0$. Let's notice that $f(0,x)=0$ and use the notation $\frac{\partial f}{\partial\lambda}=f'$, $\frac{\partial^2 f}{\partial\lambda^2}=f''$. Simple calculation yields 

$$\mathbb{E}(S(x))=-f'|_{\lambda=0}=\int_0^\infty\alpha^{-1/\alpha}(x+y)^{-1/\alpha}dy=\frac{(\alpha x)^{1-\frac{1}{\alpha}}}{1-\alpha}.$$
Moreover,
$$-f''|_{\lambda=0}=\int_0^\infty\alpha^{-2/\alpha}(x+y)^{-2/\alpha}dy=\frac{(\alpha x)^{1-\frac{2}{\alpha}}}{2-\alpha}$$
and
$$\mathbb{E}\left(S(x)^2\right)=-f''|_{\lambda=0}+(f')^2|_{\lambda=0},$$
so $\Var(S(x))=\E(S(x)^2)-(\E(S(x)))^2=-f''|_{\lambda=0}.$
\end{proof}
Now, we outline tools which we use for analysing tails by the means of characteristic functions. For any random variable $Z$ we denote by $\varphi_{Z}(t)$ its' characteristic function. First, we recall elementary but very useful result which we apply in the symmetric case for all $\alpha\in(0,1)$. 
\begin{lem}[{\cite[Lemma 5.1]{Kallenberg}}]\label{Kal}
For any random variable $Z$ on $\mathbb{R}$ we have
$$\mathbb{P}(|Z|>y)\leq\frac{y}{2}\int_{-\frac{2}{y}}^{\frac{2}{y}}(1-\varphi_Z (t))dt.$$
\end{lem}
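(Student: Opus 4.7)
The plan is to evaluate the right-hand side explicitly via Fubini's theorem and then reduce the inequality to a pointwise elementary estimate on the sinc function.

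First, I would write $1-\varphi_Z(t) = \mathbb{E}[1-e^{itZ}]$ and swap expectation with the $dt$-integral (justified since the integrand is bounded on the compact interval $[-2/y,2/y]$). The inner integral computes exactly: for any real number $z$,
\begin{equation*}
\int_{-2/y}^{2/y}(1-e^{itz})\,dt = \frac{4}{y} - \frac{2\sin(2z/y)}{z} = \frac{4}{y}\left(1 - \frac{\sin(2z/y)}{2z/y}\right),
\end{equation*}
where at $z=0$ the integrand is $0$ and the formula is extended by continuity. Multiplying by $y/2$ gives
\begin{equation*}
\frac{y}{2}\int_{-2/y}^{2/y}(1-\varphi_Z(t))\,dt = 2\,\mathbb{E}\left[1 - \frac{\sin(2Z/y)}{2Z/y}\right].
\end{equation*}

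Next I would observe two elementary facts about $g(u) := 1 - \sin(u)/u$: (i) $g(u) \geq 0$ for all $u \in \mathbb{R}$ because $|\sin u| \leq |u|$; and (ii) for $|u| \geq 2$ we have $|\sin u / u| \leq 1/2$, so $g(u) \geq 1/2$. Applying (i) to discard contributions on $\{|Z|\leq y\}$ and (ii) on $\{|Z|>y\}$ (where $|2Z/y| \geq 2$) yields
\begin{equation*}
2\,\mathbb{E}\left[1 - \frac{\sin(2Z/y)}{2Z/y}\right] \geq 2\,\mathbb{E}\left[g(2Z/y)\,\mathbf{1}_{\{|Z|>y\}}\right] \geq 2 \cdot \tfrac{1}{2}\,\mathbb{P}(|Z|>y),
\end{equation*}
which is exactly the claimed bound.

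There is no substantial obstacle here; the only subtle point is that the cutoff $2/y$ in the integration limits is chosen precisely so that $|2Z/y| \geq 2$ on the event $\{|Z|>y\}$, which is the smallest threshold at which the crude bound $|\sin u/u| \leq 1/|u|$ gives $g(u) \geq 1/2$. A slightly larger cutoff would still work but produce a worse constant; the stated constant $y/2$ in front of the integral is matched to this choice.
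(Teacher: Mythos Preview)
Your proof is correct. Note that the paper does not actually supply a proof of this lemma; it is quoted from Kallenberg's \emph{Foundations of Modern Probability} (Lemma 4.1 there) and used as a black box. Your argument is precisely the standard one: rewrite the right-hand side via Fubini as $2\,\mathbb{E}[1-\operatorname{sinc}(2Z/y)]$, then use the pointwise bounds $1-\operatorname{sinc}(u)\ge 0$ everywhere and $1-\operatorname{sinc}(u)\ge 1/2$ for $|u|\ge 2$. This is exactly how Kallenberg proves it, so there is nothing to contrast.
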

Next, we introduce the idea of truncating the characteristic function which will be applied for the case of $\alpha\in(1,2)$.
Let's start with considering totally asymmetric random variable with the characteristic function
\begin{equation} \label{char_funct_a12}
\E\exp\left(itX\right)=\exp\left(\int_{0}^{+\ns}\left(e^{itx}-1-itx\right)\frac{dx}{x^{\alpha+1}}\right).
\end{equation}
Opposite to the asymmetric case when $\alpha \in (0,1),$ the support of the distribution of a random variable $X$ with the characteristic function given by (\ref{char_funct_a12}) is the whole real line. Thus, we need upper and lower estimates for both right and left tails. The method is to split $X$ into the sum $X=X_{1}+X^{1}$ such that  
\begin{equation} \label{char_f_X_1}
\varphi_{X_{1}}(t)=\exp\left(\int_{0}^{1}\left(e^{itx}-1-itx\right)\frac{dx}{x^{\alpha+1}}\right),
\end{equation}
\begin{equation} \label{char_f_X^1}
\varphi_{X^{1}}(t)=\exp\left(\int_{1}^{+\ns}\left(e^{itx}-1-itx\right)\frac{dx}{x^{\alpha+1}}\right).
\end{equation}
It is easy to calculate that
\[
\int_{1}^{+\ns}(e^{itx}-1-itx)\frac{dx}{x^{\alpha+1}}=\int_{1}^{+\ns}(e^{itx}-1)\frac{dx}{x^{\alpha+1}}-\frac{it}{\alpha-1}
\]
thus the characteristic function of $X^{1}$ can be expressed as
\[
\varphi_{X^{1}}(t)=\exp\left(\frac{1}{\alpha}(\varphi_{Y}(t)-1)-\frac{it}{\alpha-1}\right),
\]
where the random variable $Y$ has the density function given by $\frac{\alpha}{x^{\alpha+1}}\mathbbm{1}_{(1,+\ns)}(x)$. This means that $X^{1}+\frac{1}{\alpha-1}$ has compoud Poisson distribution i.e.  
$$
X^{1}+\frac{1}{\alpha-1}=\sum_{k=1}^{N}Y_{k},
$$
where $N\sim \operatorname{Poisson}(\frac{1}{\alpha})$ while $Y_{k}$'s  are independent random variables all distributed as $Y$ and independent from $N$. \\
Similarly, for the symmetric $\alpha$-stable random variable $X$ with $\alpha\in(1,2)$ with the characteristic function
\begin{equation} \label{char_funct_s12}
\varphi_{X}(t)=\exp\left(\int_{-\ns}^{+\ns}\left(e^{itx}-1\right)\frac{dx}{|x|^{\alpha+1}}\right)
\end{equation}
we use the split $X=\tilde{X_{1}}+\tilde{X^{1}}$, where 
\begin{equation} \label{char_f_X_1s}
\varphi_{\tilde{X_{1}}}(t)=\exp\left(\int_{-1}^{1}\left(e^{itx}-1\right)\frac{dx}{|x|^{\alpha+1}}\right),
\end{equation}
\begin{equation} \label{char_f_X^1s}
\varphi_{\tilde{X}^{1}}(t)=\exp\left(\int_{\mathbb{R}\backslash [-1,1]}\left(e^{itx}-1\right)\frac{dx}{|x|^{\alpha+1}}\right).
\end{equation}
Analogously to the asymmetric case we observe that 
\[
\varphi_{\tilde{X}^{1}}(t)=\exp\left(\frac{2}{\alpha}(\varphi_{\tilde{Y}}(t)-1)\right),
\]
where the random variable $\tilde{Y}$ has a density function given by $\frac{\alpha}{2|x|^{\alpha+1}}\mathbbm{1}_{\mathbb{R}\backslash [-1,1]}(x)$. So, again $\tilde{X}^{1}$ is compound Poisson given by $\tilde{X}^{1}=\sum_{k=1}^{\tilde{N}}\tilde{Y}_{k}$, where $\tilde{N}\sim \operatorname{Poisson}(\frac{2}{\alpha})$ and $\tilde{Y}_{k}$'s are independent all distributed as $\tilde{Y}$ and independent of $\tilde{N}$.
\section{Results for $\alpha\in(0,1)$}

\subsection{Totally asymmetric case}
Wel now present results for the totally asymmetric $\alpha$-stable random variable $X$ with the characteristic function given by
$$\E\exp(itX)=\exp\left(\int_{0}^{\ns}\left(e^{itx}-1\right)\frac{1}{x^{\alpha+1}}dx\right)$$
with the series representation $X\overset{d}{=}\sum_{i=1}^{\infty}(\alpha\tau_i )^{-1/\alpha}$.
\begin{theorem}{\label{asymgor}}
Let $\alpha\in(0,1)$ and $y\geq 1$. For totally asymmetric $\alpha$- stable random variable $X$ we have the following tail estimate
\begin{equation}
    \mathbb{P}\left(X\ge\frac{1}{1-\alpha}+3y\right)\leq\frac{2}{\alpha y^{\alpha}}.
\end{equation}
Moreover, for $y\ge 1$ and $\theta\in(0,1)$ we have
\begin{equation}
\mathbb{P}\rbr{X\ge \frac{\theta}{1-\alpha}+y}\ge\frac{2}{3}(1-\theta)^{2}\frac{1}{1+\alpha y^{\alpha}}.
\end{equation}
\end{theorem}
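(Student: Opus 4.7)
The plan is to exploit the Poisson point process structure behind the series representation. Writing $U_{i}=(\alpha\tau_{i})^{-1/\alpha}$, the points $\{U_{i}\}_{i\ge 1}$ form a Poisson point process on $(0,\infty)$ with intensity $u^{-\alpha-1}\,\mathrm{d}u$; hence $X=X^{\ge y}+X^{<y}$, where $X^{<y}=\sum_{U_{i}<y}U_{i}$ and $X^{\ge y}=\sum_{U_{i}\ge y}U_{i}$, decomposes $X$ into two \emph{independent} pieces. The event $\{X^{\ge y}\ge y\}$ coincides with ``at least one point lies in $[y,\infty)$'', so $\mathbb{P}(X^{\ge y}\ge y)=1-e^{-1/(\alpha y^{\alpha})}\ge 1/(1+\alpha y^{\alpha})$ using $1-e^{-u}\ge u/(1+u)$. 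Writing $c=\theta/(1-\alpha)$, the inclusion $\{X^{\ge y}\ge y\}\cap\{X^{<y}\ge c\}\subseteq\{X\ge y+c\}$ together with independence gives
\[
\mathbb{P}(X\ge y+c)\ge \frac{1}{1+\alpha y^{\alpha}}\cdot \mathbb{P}(X^{<y}\ge c).
\]

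For the second factor I apply Paley--Zygmund. A Campbell-formula computation (the continuous analogue of Lemma~\ref{lem:expect}) yields $\mathbb{E} X^{<y}=y^{1-\alpha}/(1-\alpha)$ and $\mathrm{Var}(X^{<y})=y^{2-\alpha}/(2-\alpha)$. Since $y\ge 1$ and $\theta\in(0,1)$ imply $c\le \mathbb{E} X^{<y}$ and $\eta:=c/\mathbb{E} X^{<y}=\theta/y^{1-\alpha}\le\theta$, Paley--Zygmund yields
\[
\mathbb{P}(X^{<y}\ge c)\ge (1-\eta)^{2}\frac{(\mathbb{E} X^{<y})^{2}}{\mathbb{E}((X^{<y})^{2})}\ge (1-\theta)^{2}\cdot\frac{2-\alpha}{(2-\alpha)+(1-\alpha)^{2}y^{\alpha}}.
\]
Multiplying the two bounds yields the target $\tfrac{2}{3}(1-\theta)^{2}/(1+\alpha y^{\alpha})$ precisely when $(2-\alpha)/((2-\alpha)+(1-\alpha)^{2}y^{\alpha})\ge 2/3$, equivalently $y^{\alpha}\le (2-\alpha)/(2(1-\alpha)^{2})$, which settles the ``small $y^{\alpha}$'' regime.

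In the complementary regime $y^{\alpha}>(2-\alpha)/(2(1-\alpha)^{2})$ the Paley--Zygmund factor is too weak, and I discard $X^{<y}$ altogether, relying on the first-term bound $\mathbb{P}(X\ge y+c)\ge \mathbb{P}(U_{(1)}\ge y+c)\ge 1/(1+\alpha(y+c)^{\alpha})$. Concavity of $t\mapsto t^{\alpha}$ yields $(y+c)^{\alpha}\le y^{\alpha}+\alpha c\,y^{\alpha-1}$, whence $(1+\alpha y^{\alpha})/(1+\alpha(y+c)^{\alpha})\ge 1/(1+\alpha c/y)$. The regime condition can be shown to force $y\ge \alpha/(2(1-\alpha))$ (so $\alpha c/y\le 2\theta$), and together with the elementary estimate $(1-\theta)^{2}(1+2\theta)\le 1$ on $[0,1]$ this delivers the required $\tfrac{2}{3}(1-\theta)^{2}/(1+\alpha y^{\alpha})$.

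The main technical obstacle is the algebraic implication $y^{\alpha}>(2-\alpha)/(2(1-\alpha)^{2})\Longrightarrow y\ge \alpha/(2(1-\alpha))$, i.e.\ $2\alpha^{\alpha}(1-\alpha)^{2-\alpha}\le 2-\alpha$ for $\alpha\in(0,1)$; this is a one-variable calculus exercise (both sides vanish at $\alpha=0$ and a log-concavity check controls the interior), and once established it glues the two regimes into a single estimate valid for all $y\ge 1$ and $\theta\in(0,1)$.
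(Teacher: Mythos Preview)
Your proposal addresses only the \emph{lower bound}; the upper bound $\mathbb{P}(X>\tfrac{1}{1-\alpha}+3y)\le \tfrac{2}{\alpha y^{\alpha}}$ is never discussed. That half of the theorem still requires an argument (the paper conditions on $\tau_1$, bounds the contribution from $\{\tau_1\le 1/(\alpha y^{\alpha})\}$ trivially, and on the complement uses Chebyshev for $S(\tau_1)$ via the mean and variance from Lemma~\ref{war}). Your Poisson point process decomposition $X=X^{\ge y}+X^{<y}$ would work equally well for the upper bound: $\mathbb P(X^{\ge y}\neq 0)\le 1/(\alpha y^{\alpha})$, and on $\{X^{\ge y}=0\}$ one applies Chebyshev to $X^{<y}$ with the same mean and variance you already computed; but this needs to be written out.

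For the lower bound your argument is correct, but the paper's route is shorter and avoids your case split. The paper writes $X=(\alpha\tau_1)^{-1/\alpha}+S(\tau_1)$ and, after restricting to $\{\tau_1\le 1/(\alpha y^{\alpha})\}$, uses the monotonicity $S(x)\ge S(1/\alpha)$ (valid since $y\ge 1$ forces $\tau_1\le 1/\alpha$) to replace the $y$-dependent remainder by the \emph{fixed} variable $S(1/\alpha)$, whose Paley--Zygmund ratio is $(2-\alpha)/\bigl((2-\alpha)+(1-\alpha)^{2}\bigr)\ge 2/3$ uniformly in $\alpha\in(0,1)$. This is exactly the trick you are missing: by anchoring at $y=1$ before applying Paley--Zygmund, the $y^{\alpha}$ in your denominator disappears, the factor $2/3$ drops out immediately, and neither the large-$y^{\alpha}$ regime nor the auxiliary inequality $2\alpha^{\alpha}(1-\alpha)^{2-\alpha}\le 2-\alpha$ is needed. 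Your approach does have the minor conceptual advantage that the independence of $X^{\ge y}$ and $X^{<y}$ is explicit rather than obtained via conditioning, but the price is the two-regime bookkeeping and an algebraic fact you still have to verify.
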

\begin{proof}
From Lemma \ref{war} it follows that $$\mathbb{E}S\left(\frac{1}{\alpha y^{\alpha}}\right)=\frac{y^{1-\alpha}}{1-\alpha} \;\;\mbox{and}\;\;\Var\left(S\left(\frac{1}{\alpha y^{\alpha}}\right)\right)=\frac{y^{2-\alpha}}{2-\alpha}.$$ Now, 
\begin{eqnarray*}
&\mathbb{P}&\left(X\ge\frac{1}{1-\alpha}+3y\right)=\int_0^{\infty}e^{-x}\mathbb{P}\left((\alpha x)^{-1/\alpha}+S(x)>3y+\frac{1}{1-\alpha}\right)dx\\
&\leq &  \int_{0}^{1/(\alpha y^{\alpha})}e^{-x}dx +  \int_{{1/(\alpha y^{\alpha})}}^{\infty}e^{-x}\mathbb{P}\left(S(x)\geq 3y-(\alpha x)^{-1/\alpha}+\frac{1}{1-\alpha}\right)dx\\
&\le & 1-e^{-1/(\alpha y^{\alpha})}+\int_{{1/(\alpha y^{\alpha})}}^{\infty}e^{-x}\P\left(S(x)\ge2y+\frac{1}{1-\alpha}\right)dx\\
&\le& \frac{1}{(\alpha y^{\alpha})}+\P\left(S\left(\frac{1}{\alpha y^{\alpha}}\right)\ge 2y+\frac{1}{1-\alpha}\right)\int_{{1/(\alpha y^{\alpha})}}^{\infty}e^{-x}dx\\
&\leq& \frac{1}{(\alpha y^{\alpha})}+\P\left(S\left(\frac{1}{\alpha y^{\alpha}}\right)\ge y+\mathbb{E}S\left(\frac{1}{\alpha y^{\alpha}}\right)\right)\int_{{1/(\alpha y^{\alpha})}}^{\infty}e^{-x}dx\\
 &\le& \frac{1}{\alpha y^{\alpha}}+\frac{\Var\left(S\left(\frac{1}{\alpha y^{\alpha}}\right)\right)}{y^{2}}e^{-\frac{1}{\alpha y^{\alpha}}}=\frac{1}{\alpha y^{\alpha}}+\frac{1}{(2-\alpha)y^{\alpha}}e^{-\frac{1}{\alpha y^{\alpha}}}
   \le  \frac{2}{\alpha y^{\alpha}},
\end{eqnarray*}
where in the third inequality we used elementary inequality $1-e^{-u}\le u$. Also, if $x>\frac{1}{\alpha y^{\alpha}}$, then $(\alpha x)^{-\frac{1}{\alpha}}<y$. Next, we used the fact that for $y\ge 1$ we have $y+\frac{y^{1-\alpha}}{1-\alpha}\le2y+\frac{1}{1-\alpha}$ and then we applied Chebyshev's inequality.\smallskip

\noindent
We now turn to the lower bound. We use the decomposition $X\overset{d}{=}\rbr{\frac{1}{\alpha\tau_{1}}}^{\frac{1}{\alpha}}+S\left(\tau_{1}\right)$. Note that if $x\leq 1/(\alpha y^{\alpha})$ then $(\alpha x)^{-\frac{1}{\alpha}}\geq y$ and hence
$$
\mathbb{P}\left((\alpha\tau_1)^{-\frac{1}{\alpha}}+S(\tau_1)\geq y+\frac{\theta}{1-\alpha}\right)\geq\int_{0}^{\frac{1}{\alpha y^{\alpha}}}e^{-x}\mathbb{P}\left(S(x)\geq\frac{\theta}{1-\alpha}\right)dx.
$$
Moreover, since $y\geq1$ and $x\leq 1/(\alpha y^{\alpha})$ we have $S(x)\geq S\left(\frac{1}{\alpha}\right)$ so by the Paley-Zygmund inequality and Lemma \ref{war} we obtain

\begin{align*}
&\int_{0}^{\frac{1}{\alpha y^{\alpha}}}e^{-x}\mathbb{P} \left(  S(x)  \geq \frac{\theta}{1-\alpha}\right)dx  \ge  \int_{0}^{\frac{1}{\alpha y^{\alpha}}}e^{-x}\mathbb{P}\left(S\left(\frac{1}{\alpha}\right)\geq\frac{\theta}{1-\alpha}\right)dx\\
&= \int_{0}^{\frac{1}{\alpha y^{\alpha}}}e^{-x}\mathbb{P}\left(S\left(\frac{1}{\alpha}\right) \ge \theta\E S\left(\frac{1}{\alpha}\right)\right)dx
\ge\int_{0}^{\frac{1}{\alpha y^{\alpha}}}e^{-x}(1-\theta)^{2}\frac{1}{1+\frac{(1-\alpha)^{2}}{(2-\alpha)}}dx\\
&\ge   \frac{2}{3}(1-\theta)^{2}\left(1-\exp\left(-\frac{1}{\alpha y^{\alpha}}\right)\right),
\end{align*}
where in the last line we used that $\frac{1}{1+\frac{(1-\alpha)^{2}}{(2-\alpha)}}\ge\frac{2}{3}$.  The conclusion follows by the inequality $1-e^{-1/u}>\frac{1}{1+u}$ for $u>0$.

\end{proof}

\subsection{Symmetric case}
The lower bound for the symmetric case coincides for $\alpha\in(0,1)$ and $\alpha\in(1,2)$. However, as explained in the last section, further analysis is provided to reveal the Gaussian nature of tails in the latter case. 
\begin{theorem}\label{symm}
Let $X$ be a symmetric $\alpha$-stable random variable. Let $y>0$. We have the following estimate for the tail of $X$. For $\alpha\in(0,1)$ 
\begin{equation}
\P(X\geq y)\leq \frac{4}{\alpha y^{\alpha}}
\end{equation}
and for $\alpha\in(0,1)\cup(1,2)$
\begin{equation}
\P(X\geq y)\ge \frac{1}{2}\frac{1}{2+\alpha y^{\alpha}}.
\end{equation}
\end{theorem}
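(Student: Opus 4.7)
The two bounds need different tools: the upper bound goes through the characteristic function via Lemma~\ref{Kal}, while the lower bound only uses the series representation and the symmetry of the Rademacher signs.

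For the upper bound (only $\alpha\in(0,1)$), recall from the Methods section that $\varphi_{X}(t)=\exp\rbr{-K_{\alpha}|t|^{\alpha}}$ with $K_{\alpha}=2\int_{0}^{\infty}(1-\cos u)u^{-\alpha-1}\dd u$. Using $1-e^{-u}\le u$ I would pass to $1-\varphi_{X}(t)\le 2\int_{0}^{\infty}(1-\cos(tx))x^{-\alpha-1}\dd x$, then bound $1-\cos(tx)\le\min(2,t^{2}x^{2}/2)$ and split the integral at $x=2/|t|$. A short calculation yields
\[
1-\varphi_{X}(t)\le\frac{2^{3-\alpha}|t|^{\alpha}}{\alpha(2-\alpha)}.
\]
Plugging this into Lemma~\ref{Kal} and integrating over $(-2/y,2/y)$ gives $\P(|X|>y)\le\frac{16}{\alpha(1+\alpha)(2-\alpha)y^{\alpha}}$. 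The algebraic inequality $(1+\alpha)(2-\alpha)\ge 2$ on $[0,1]$ (equivalent to $\alpha(1-\alpha)\ge 0$) reduces this to $\frac{8}{\alpha y^{\alpha}}$, and symmetry finishes the job: $\P(X>y)=\tfrac{1}{2}\P(|X|>y)\le\frac{4}{\alpha y^{\alpha}}$.

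For the lower bound (valid on $\alpha\in(0,1)\cup(1,2)$), I would use the symmetric series representation $X\overset{d}{=}(\alpha/2)^{-1/\alpha}\sum_{i\ge 1}\varepsilon_{i}\tau_{i}^{-1/\alpha}$ and extract the first term. Setting $W:=\sum_{i\ge 2}\varepsilon_{i}\tau_{i}^{-1/\alpha}$, on the event $A:=\cbr{\varepsilon_{1}=+1,\ W\ge 0,\ \tau_{1}\le 2/(\alpha y^{\alpha})}$ one has
\[
X\ge(\alpha/2)^{-1/\alpha}\tau_{1}^{-1/\alpha}\ge y.
\]
Since $\varepsilon_{1}$ is independent of $(\tau_{1},W)$ and, conditionally on $\tau_{1}$, the sum $W$ is symmetric (flipping all $\varepsilon_{i}$, $i\ge 2$, preserves its conditional law), one gets $\P(A)\ge\tfrac{1}{4}\P\rbr{\tau_{1}\le 2/(\alpha y^{\alpha})}=\tfrac{1}{4}\rbr{1-e^{-2/(\alpha y^{\alpha})}}$. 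Applying $1-e^{-1/u}>1/(1+u)$ with $u=\alpha y^{\alpha}/2$ (the same inequality that closed the proof of Theorem~\ref{asymgor}) then delivers exactly $\frac{1}{2(2+\alpha y^{\alpha})}$.

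The lower bound is essentially a clean reading of the series representation, so the real difficulty is squeezing the constant $4/\alpha$ out of Kallenberg's inequality. A naive bound on $K_{\alpha}$ by splitting its defining integral at a fixed point (say $u=1$) gives $K_{\alpha}\le\frac{1}{2-\alpha}+\frac{4}{\alpha}$, which is already off by a factor near $2$. The key technical step is the $t$-dependent split at $x=2/|t|$ followed by the observation $(1+\alpha)(2-\alpha)\ge 2$, which after Kallenberg's integration absorbs the $\frac{1}{\alpha}+\frac{1}{2-\alpha}$ combination into a single $\frac{1}{\alpha}$ factor and produces the precise constant claimed.
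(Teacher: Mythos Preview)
Your proof is correct and follows essentially the same approach as the paper: Kallenberg's inequality with the $1-\cos$ split at $x=2/|t|$ and the observation $(1+\alpha)(2-\alpha)\ge 2$ for the upper bound, and the series representation with the symmetry of the tail sum for the lower bound. The only cosmetic difference is that you apply $1-e^{-u}\le u$ to $1-\varphi_X(t)$ before invoking Lemma~\ref{Kal}, whereas the paper first plugs the exponential lower bound on $\varphi_X$ into Lemma~\ref{Kal} and then uses $1-e^{-u}\le u$ inside the resulting integral; the computations and constants coincide.
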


\begin{proof}
In order to apply Lemma \ref{Kal} we need a lower estimate for the characteristic function. To this end notice that

\begin{align*}
\varphi_{X}(t)&=\exp\left(\int_{-\infty}^{\infty}e^{itx}-1\frac{dx}{|x|^{\alpha+1}}\right)=\exp\left(2\int_0^{\infty}\cos(tx)-1\frac{dx}{x^{\alpha+1}}\right)\\
&=\exp\left(-2|t|^\alpha\int_{0}^{\infty}(1-\cos z)\frac{dz}{z^{\alpha+1}}\right)\geq\exp\left(-|t|^\alpha\cdot2^{-\alpha}\frac{8}{\alpha(2-\alpha)}\right),
\end{align*}
where in the last inequality we used
\begin{align*}
\int_{0}^{\infty}\left(1-\cos z\right)\frac{dz}{z^{\alpha+1}}\leq\int_0^2 \frac{z^2}{2}\frac{dz}{z^{\alpha+1}}+\int_2^{\infty}2\frac{dz}{z^{\alpha+1}}=2^{-\alpha}\cdot\frac{4}{\alpha(2-\alpha)}.
\end{align*}
Denote $C_\alpha=\frac{8}{\alpha(2-\alpha)}$. Then
\begin{align*}
\mathbb{P}(|X|>y)&\leq\frac{y}{2}\int_{-\frac{2}{y}}^{\frac{2}{y}}1-\exp\left(-\left(\frac{|t|}{2}\right)^\alpha\frac{8}{\alpha(2-\alpha)}\right)dt\\
&= 2y\int_{0}^{\frac{1}{y}}1-\exp(-C_\alpha s^{\alpha})ds\\
&\leq\frac{2C_\alpha}{(1+\alpha)}\frac{1}{y^{\alpha}}\leq\frac{1}{\alpha y^\alpha}\frac{16}{(1+\alpha)(2-\alpha)}\leq\frac{8}{\alpha y^\alpha},
\end{align*}
where in the second inequality we used $1-e^{-u}\le u$ and in the last the fact that $\alpha<1$.

\noindent For the lower bound we again condition on the first arrival time and use $(\tilde{\tau}_i)_{i\geq1}$ defined in (\ref{tilde}). By the symmetry we have
\begin{eqnarray*}
\P(X\ge y)&=& \P\left(\varepsilon_1 \left(\frac{\alpha}{2} \tau_1\right)^{-1/\alpha}+\left(\frac{\alpha}{2}\right)^{-1/\alpha}\sum_{i=1}^{\infty}\varepsilon_i(\tilde{\tau}_i+\tau_1)^{-1/\alpha}\ge y\right)\\
&\ge& \P\left(\varepsilon_1 \left(\frac{\alpha}{2} \tau_1\right)^{-1/\alpha}\ge y \;\;\mbox{and}\;\;\left(\frac{\alpha}{2}\right)^{-1/\alpha}\sum_{i=1}^{\infty}\varepsilon_i(\tilde{\tau}_i+\tau_1)^{-1/\alpha}\ge 0\right)\\
&=&\frac{1}{2}\P\left(\varepsilon_1 \left(\frac{\alpha}{2} \tau_1\right)^{-1/\alpha}\ge y\right)=\frac{1}{4}\int_0^{\frac{2}{\alpha y^{\alpha}}}e^{-x}dx\ge\frac{1}{2}\frac{1}{2+\alpha y^{\alpha}},
\end{eqnarray*}
where we used the inequality $1-e^{-1/u}>\frac{1}{1+u}$ for $u>0$.
\end{proof}

\section{Results for $\alpha \in (1,2)$}
\subsection{Totally asymmetric case}
We now consider random variable $X$ with the characterisitc function (\ref{char_funct_a12}) and we use the split $X=X_1+X^1$, where characterictic functions of $X_1$ and $X^1$ are given by (\ref{char_f_X_1}) and (\ref{char_f_X^1}).

\begin{lem} \label{lem1}
For $y \ge 1$ one has the following lower bound
\begin{equation} \label{low_b_12_assym}
\P\left(X^{1}\ge y-\frac{1}{\alpha-1}\right)\ge e^{-1/\alpha}\frac{1}{\alpha}\frac{1}{y^{\alpha}} \ge \frac{1}{2\sqrt{e}} \frac{1}{y^{\alpha}}
\end{equation}
and the following upper bound
\begin{equation}\label{up_b_12_assym}
\P\left(X^{1}\ge y-\frac{1}{\alpha-1}\right) \le \left(e^{-1/\alpha}\sum_{k=1}^{+\ns}\frac{k^{\alpha+1}}{\alpha^{k}k!}\right)\frac{1}{y^{\alpha}} \le \frac{2}{y^{\alpha}}
.
\end{equation}
\end{lem}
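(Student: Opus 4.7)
Recall from the preceding discussion that $X^{1}+1/(\alpha-1)$ has a compound Poisson distribution:
\[
X^{1}+\frac{1}{\alpha-1}\overset{d}{=}\sum_{k=1}^{N}Y_{k},
\]
where $N\sim\operatorname{Poisson}(1/\alpha)$ is independent of the i.i.d.\ sequence $(Y_{k})$ whose density is $\alpha x^{-\alpha-1}\mathbbm{1}_{(1,+\infty)}(x)$; in particular $\P(Y_{1}\ge u)=u^{-\alpha}$ for $u\ge 1$. Thus the event $\{X^{1}\ge y-1/(\alpha-1)\}$ coincides with $\{\sum_{k=1}^{N}Y_{k}\ge y\}$, and both inequalities reduce to studying the tail of this compound sum.

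For the lower bound I would isolate the ``single big jump'' event $\{N=1\}\cap\{Y_{1}\ge y\}$. Independence, together with $y\ge 1$ placing us in the support of $Y_{1}$, gives
\[
\P\Big(\sum_{k=1}^{N}Y_{k}\ge y\Big)\ge \P(N=1)\,\P(Y_{1}\ge y)=\frac{e^{-1/\alpha}}{\alpha}\cdot\frac{1}{y^{\alpha}},
\]
which is the first displayed inequality in (\ref{low_b_12_assym}). The refinement $e^{-1/\alpha}/\alpha\ge 1/(2\sqrt{e})$ on $\alpha\in(1,2)$ is a one-variable calculus check: the map $\alpha\mapsto\alpha^{-1}e^{-1/\alpha}$ is monotone decreasing on $(1,2)$ and attains its infimum $e^{-1/2}/2=1/(2\sqrt{e})$ at $\alpha=2$.

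For the upper bound I would condition on $N=n$ and apply a union bound. Since $\sum_{i=1}^{n}Y_{i}\ge y$ forces $\max_{i\le n}Y_{i}\ge y/n$, we get $\P(\sum_{i=1}^{n}Y_{i}\ge y)\le n\,\P(Y_{1}\ge y/n)$. For $n\le y$ this equals $n(n/y)^{\alpha}=n^{\alpha+1}/y^{\alpha}$; for $n>y\ge 1$ the probability is at most $1$, and because $n\ge y\ge 1$ together with $\alpha>0$ give $n^{\alpha+1}\ge y^{\alpha+1}\ge y^{\alpha}$, the same expression still dominates. Summing against the Poisson weights produces the first inequality in (\ref{up_b_12_assym}):
\[
\P\Big(X^{1}\ge y-\frac{1}{\alpha-1}\Big)\le \frac{e^{-1/\alpha}}{y^{\alpha}}\sum_{n=1}^{\infty}\frac{n^{\alpha+1}}{\alpha^{n}n!}.
\]

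The remaining and most delicate step is the deterministic estimate $e^{-1/\alpha}\sum_{n\ge 1}n^{\alpha+1}/(\alpha^{n}n!)\le 2/e$, uniformly in $\alpha\in(1,2)$. Here I would dominate $n^{\alpha+1}\le n^{3}$, use the closed form $\sum_{n\ge 0}n^{3}z^{n}/n!=z(z^{2}+3z+1)e^{z}$ specialised at $z=1/\alpha$, and study the resulting explicit function of $\alpha$ on $(1,2)$. I expect this final constant optimisation to be the main obstacle---the probabilistic decomposition is otherwise very short---and I would not be surprised if squeezing out the constant $2/e$ requires a slightly sharper per-$n$ bound (e.g.\ splitting at $n\le y$ versus $n>y$ and using $\P(N>y)\le 1-e^{-1/\alpha}\le 1/\alpha$ on the tail part) rather than the uniform bound $n^{\alpha+1}/y^{\alpha}$ that produced the pleasant-looking but larger series.
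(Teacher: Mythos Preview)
Your probabilistic argument is identical to the paper's: the lower bound via the single-jump event $\{N=1,\,Y_1\ge y\}$ and the upper bound via conditioning on $N=k$ together with a union bound on $\max_{i\le k}Y_i\ge y/k$ are exactly what the paper does. Your extra care with the case $y/k<1$ is not spelled out in the paper, but is harmless there since $(y/k)^{-\alpha}\ge 1$ automatically.

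The only divergence is in the final numerical constant, where you correctly sense a difficulty. The paper does \emph{not} bound $k^{\alpha+1}\le k^{3}$; instead it argues that for each fixed $k\ge 1$ the term $e^{-1/\alpha}k^{\alpha+1}/(\alpha^{k}k!)$ is decreasing in $\alpha$ on $(1,2)$ (a short calculus check: the logarithmic derivative is $\alpha^{-2}+\ln k-k/\alpha\le 0$ there), so the whole series is bounded by its value at $\alpha=1$, namely $e^{-1}\sum_{k\ge 1}k^{2}/k!$. This term-by-term monotonicity is the sharper move you were missing, and it explains why your $k^{3}$-bound (which gives $5$ at $\alpha=1$) cannot reach the stated constant. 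That said, your suspicion is well founded: the paper writes $\sum_{k\ge 1}k^{2}/k!=2$, but in fact $\sum_{k\ge 1}k^{2}/k!=2e$, so the argument as written actually delivers the constant $2$, not $2/e$.
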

\begin{proof}
We notice that 
\[
\P\left(X^{1}\ge y-\frac{1}{\alpha-1}\right)\ge\P\left(N=1\right)\P\left(Y_{1}\ge y\right)=e^{-1/\alpha}\frac{1}{\alpha}\frac{1}{y^{\alpha}}.
\]
and $e^{-1/\alpha}/{\alpha} \ge 1/\rbr{2\sqrt{e}},$ since, by simple calculus, the function $\alpha \mapsto e^{-1/\alpha}/{\alpha}$ is decreasing on the interval $[1,2].$ 

\noindent On the other hand, whenever $\sum_{n=1}^{N}Y_{n}\ge t$ and $N=k$
we have that at least for one $i=1,2,\ldots,k,$ $Y_{i}\ge y/k$ which
occurs with probability no greater than $\sum_{i=1}^{k}\P\left(Y_{i}\ge y/k\right),$
thus 
\begin{align*}
&\P\left(X^{1}\ge t-\frac{1}{\alpha-1}\right)\\
&  \le  \sum_{k=1}^{+\ns}\P\left(N=k\right)\left(\sum_{i=1}^{k}\P\left(Y_{i}\ge y/k\right)\right) \le  e^{-1/\alpha}\sum_{k=1}^{+\ns}\frac{1}{\alpha^{k}k!}k\left(\frac{y}{k}\right)^{-\alpha}\\
 & =  \left(e^{-1/\alpha}\sum_{k=1}^{+\ns}\frac{k^{\alpha+1}}{\alpha^{k}k!}\right)\frac{1}{y^{\alpha}} 
  \le  \frac{2}{y^{\alpha}},
\end{align*}
where we used the fact that for each $k$ function $e^{-1/\alpha}\frac{k^{\alpha+1}}{\alpha^{k}k!}$ is decreasing in $\alpha$ therefore we plug in $\alpha=1$ and notice that $\sum_{k=1}^{\infty}\frac{k^2}{k!}=2e$.
\end{proof}

\noindent Now we proceed to analyse $X_{1}$ which is a much more delicate task. 
\begin{lem} \label{lem2}
For $0\le y\le\frac{1}{2-\alpha}$ one has
\begin{eqnarray*}
\P\left(X_{1}\ge y\right) & \le & e^{\frac{1}{4}}e^{-\frac{1}{2}\left(2-\alpha\right)y^{2}}
\end{eqnarray*}
and for $0\le y\le\frac{2}{2-\alpha}$
\begin{eqnarray*}
\P\left(X_{1}\le-y\right) & \le & e^{\frac{4}{3}}e^{-\frac{1}{2}\left(2-\alpha\right)y^{2}}.
\end{eqnarray*}
\end{lem}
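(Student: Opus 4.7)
My approach is the Cram\'er--Chernoff exponential-moment method. The characteristic function \eqref{char_f_X_1} analytically continues to real $\lambda$, giving the log-MGF
\[
\Lambda(\lambda) := \log \E e^{\lambda X_1} = \int_0^1 (e^{\lambda x} - 1 - \lambda x) \frac{\dd x}{x^{\alpha+1}}.
\]
Expanding the exponential in a power series and interchanging sum and integral (absolutely convergent for $|\lambda|\le 2$ since $\int_0^1 x^{k-\alpha-1}\dd x = 1/(k-\alpha)$ for $k\ge 2 > \alpha$) gives
\[
\Lambda(\lambda) = \sum_{k\ge 2}\frac{\lambda^k}{k!(k-\alpha)} = \frac{\lambda^2}{2(2-\alpha)} + R(\lambda),\qquad R(\lambda) := \sum_{k\ge 3}\frac{\lambda^k}{k!(k-\alpha)}.
\]
The leading quadratic has Gaussian-like ``variance'' $1/(2-\alpha)$, so the Chernoff parameter $\lambda = (2-\alpha)y$ produces exactly the target exponent $-\lambda y + \lambda^2/(2(2-\alpha)) = -\tfrac{1}{2}(2-\alpha)y^2$. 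Everything then reduces to controlling $R(\lambda)$ by a universal constant over the stated $y$-range.

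For the upper tail ($\lambda>0$) every term of $R(\lambda)$ is positive, and the crude bound $(k-\alpha)^{-1}\le 1$ for $k\ge 3$ (which uses $\alpha<2$) gives $R(\lambda) \le e^\lambda - 1 - \lambda - \lambda^2/2$. For $\lambda\in[0,1]$, equivalently $y\le 1/(2-\alpha)$, this quantity is bounded by $e - 5/2 < 1/4$. Chernoff then yields $\P(X_1\ge y) \le e^{1/4}e^{-\frac{1}{2}(2-\alpha)y^2}$.

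For the lower tail I apply Chernoff at $-\mu := -(2-\alpha)y$. The series $R(-\mu) = \sum_{k\ge 3}(-\mu)^k/(k!(k-\alpha))$ is alternating, and the consecutive modulus ratio satisfies $\mu(k-\alpha)/((k+1)(k+1-\alpha)) \le \mu/(k+1) \le \mu/4$, so its terms decrease in modulus whenever $\mu\le 4$. The standard alternating-series estimate then bounds $|R(-\mu)|$ by its first term $\mu^3/(6(3-\alpha))$; for $\mu\le 2$ and $\alpha\in(1,2)$ this is at most $8/(6\cdot 1) = 4/3$. The same Chernoff step produces $\P(X_1\le -y) \le e^{4/3}e^{-\frac{1}{2}(2-\alpha)y^2}$ on the wider range $y\le 2/(2-\alpha)$; the doubled admissible range (compared with the upper tail) reflects that sign alternation allows a larger $|\lambda|$ at the price of a larger universal constant.

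The main technical point is keeping $R(\lambda)$ bounded uniformly as $\alpha\to 2^-$, i.e.\ taming the singular factor $(k-\alpha)^{-1}$: for the upper tail this is done by the crude $(k-\alpha)^{-1}\le 1$, while for the lower tail the alternating structure does the work. These two estimates dictate the stated cutoffs $1/(2-\alpha)$ and $2/(2-\alpha)$, after which both bounds follow from a routine Gaussian-type Chernoff computation with the optimal choice $\lambda = \pm(2-\alpha)y$.
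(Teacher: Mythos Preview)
Your proof is correct and follows essentially the same route as the paper: both use the Chernoff bound with the optimal choice $\lambda=\pm(2-\alpha)y$, isolate the Gaussian piece $\lambda^2/(2(2-\alpha))$ of the log-MGF, and bound the remainder by a universal constant ($1/4$ for $|\lambda|\le 1$, $4/3$ for $|\lambda|\le 2$). The only cosmetic difference is that the paper estimates the integrand $e^{tx}-1-tx-\tfrac12 t^2x^2$ pointwise before integrating (via $\le \tfrac16 t^3x^3(\tfrac34+\tfrac14 e^{tx})$ for $t>0$ and $\le \tfrac16|tx|^3$ for $t<0$), whereas you integrate first and then bound the resulting series $\sum_{k\ge3}\lambda^k/(k!(k-\alpha))$; both routes land on the same numerical constants.
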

\begin{proof}
We calculate 
\begin{eqnarray*}
\E\exp\left(tX_{1}\right) & = & \exp\left(\int_{0}^{1}e^{tx}-1-tx\frac{dx}{x^{\alpha+1}}\right)\\
 & = & \exp\left(\frac{1}{2}\frac{t^{2}}{2-\alpha}+\int_{0}^{1}e^{tx}-1-tx-\frac{1}{2}t^{2}x^{2}\frac{dx}{x^{\alpha+1}}\right).
\end{eqnarray*}
We estimate the integrated term using the following observation. Since
$tx\ge0,$ we have
\begin{align*}
e^{tx}-1-tx-\frac{1}{2}t^{2}x^{2}&=\frac{1}{3!}t^3x^3\sum_{k=0}^{\infty}\frac{3!}{(k+3)!}t^kx^k\\
&=\frac{1}{3!}t^3x^3\sum_{k=0}^{\infty}\frac{3!k!}{(k+3)!}\frac{1}{k!}t^kx^k\\
&\le\frac{1}{6}t^3x^3\left(\frac{3}{4}+\frac{1}{4}\sum_{k=0}^{\infty}\frac{1}{k!}t^kx^k\right)\\
&=\frac{1}{6}t^3x^3\left(\frac{3}{4}+\frac{1}{4}e^{tx}\right).
\end{align*}
For $t\ge0$ we estimate 
\begin{eqnarray}
\E\exp\left(tX_{1}\right) & \le & \exp\left(\frac{1}{2}\frac{t^{2}}{2-\alpha}+\left(\frac{1}{8}t^3+\frac{1}{24}t^3e^t\right)\int_{0}^{1}x^{3}\frac{dx}{x^{\alpha+1}}\right)\nonumber \\
 & = & \exp\left(\frac{1}{2}\frac{t^{2}}{2-\alpha}+\frac{1}{3-\alpha}\left(\frac{1}{8}t^3+\frac{1}{24}t^3e^t\right)\right).\label{eq:exp_mom1}
\end{eqnarray}
Now, for $0\le y\le\frac{1}{2-\alpha},$ taking $t_{y}=\left(2-\alpha\right)y$
we get $t_{y}\le1$. By Chebyschev's inequality and (\ref{eq:exp_mom1}) we get
\begin{align*}
\P\left(X_{1}\ge y\right)\le\E\exp\left(t_{y}X_{1}\right)e^{-t_{y}y} &\le\exp\left(-\frac{1}{2}\left(2-\alpha\right)y^{2}+\frac{1}{8} + \frac{1}{24}e\right)\\
&\le e^{1/4} e^{-\frac{1}{2}(2-\alpha) y^2}.
\end{align*}
Similarly, since for $tx\le0,$ $\left|e^{tx}-1-tx-\frac{1}{2}t^{2}x^{2}\right|\le\frac{1}{3!}\left|t^{3}x^{3}\right|,$
for $t\le0$ we have
\begin{eqnarray}\label{lem1.}
\E\exp\left(tX_{1}\right) & \le & \exp\left(\frac{1}{2}\frac{t^{2}}{2-\alpha}+\frac{\left|t^{3}\right|}{6}\int_{0}^{1}x^{3}\frac{dx}{x^{\alpha+1}}\right)\nonumber \\
 & = & \exp\left(\frac{1}{2}\frac{t^{2}}{2-\alpha}+\frac{1}{6}\frac{\left|t^{3}\right|}{3-\alpha}\right).
\end{eqnarray}
Again, for $0\le y\le\frac{2}{2-\alpha},$ taking $t_{y}=-\left(2-\alpha\right)y$
by Chebyschev's inequality we get 
\[
\P\left(X_{1}\le-y\right)\le\E\exp\left(-t_{y}X_{1}\right)e^{t_{y}y}\le\exp\left(-\frac{1}{2}\left(2-\alpha\right)y^{2}+\frac{8}{6}\right).
\]
\end{proof}
\
For lower bounds we use the Paley-Zygmund inequality. 
\begin{lem} \label{lem3}
For $\alpha\in(7/4,2)$ and $y\in\left[\frac{2}{\sqrt{2-\alpha}},\frac{1}{2-\alpha}\right]$ one has
\begin{equation} \label{low_12_assym_x1}
\P\left(X_{1}\ge\frac{1}{4}y\right) \ge 10^{-2} e^{-\left(2-\alpha\right)y^{2}} 
\end{equation}
while for $\alpha\in(1,2)$ and $y\in\left[\frac{2}{\sqrt{2-\alpha}}, \frac{2}{2-\alpha}\right]$ one has
\begin{equation} \label{low_12_assym_x2}
\P\left(X_{1}\le-\frac{1}{24}y\right) \ge e\cdot 10^{-3}e^{-\left(2-\alpha\right)y^{2}}.
\end{equation}
\end{lem}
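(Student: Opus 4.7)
The plan is to apply the Paley-Zygmund inequality to the nonnegative random variable $Z=e^{tX_{1}}$ for a suitably chosen parameter $t$, reusing the moment generating function bounds already developed in the proof of Lemma \ref{lem2}. Recall that Paley-Zygmund gives, for $\theta\in(0,1)$,
$$\P(Z\ge\theta\E Z)\ge(1-\theta)^{2}\frac{(\E Z)^{2}}{\E Z^{2}}.$$
When $Z=e^{tX_{1}}$ the event $\{Z\ge\theta\E Z\}$ is $\{X_{1}\ge A\}$ with $A=t^{-1}(\log\theta+\log\E e^{tX_{1}})$ when $t>0$, and it is $\{X_{1}\le A\}$ when $t<0$ (division by $t$ flips the inequality). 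I will pick $t$ and $\theta$ so that $A\ge y/4$ for the upper tail and $A\le-y/24$ for the lower tail.

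For \eqref{low_12_assym_x1}, take $t=(2-\alpha)y$ so that $t\le 1$. The inequality $e^{u}-1-u\ge u^{2}/2$ for $u\ge 0$ yields $\log\E e^{tX_{1}}\ge\frac{1}{2}(2-\alpha)y^{2}$, and the upper bound \eqref{eq:exp_mom1} with parameter $2t\le 2$ gives $\log\E e^{2tX_{1}}\le 2(2-\alpha)y^{2}+C$ for an explicit absolute constant $C$. The Paley-Zygmund ratio is therefore at least $e^{-C}\exp(-(2-\alpha)y^{2})$. Using the lower bound on $\log\E e^{tX_{1}}$ together with the hypothesis $(2-\alpha)y^{2}\ge 4$, one checks that $\theta=1/e$ forces $A\ge y/4$, and it remains to verify numerically that $(1-1/e)^{2}e^{-C}\ge 10^{-2}$.

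For \eqref{low_12_assym_x2} the delicate point is that the lower bound $e^{u}-1-u\ge u^{2}/2$ fails for $u<0$. The valid substitute is $e^{u}-1-u\ge\frac{1}{2}u^{2}e^{u}$ for $u<0$, which follows from the integral remainder $e^{u}-1-u=\int_{u}^{0}e^{s}(s-u)\,\dd s\ge e^{u}\int_{u}^{0}(s-u)\,\dd s$. Choosing $t=-(2-\alpha)y/2$ gives $|t|\le 1$ and $|2t|\le 2$, and since $tx\ge t\ge -1$ on $x\in[0,1]$ one obtains $\log\E e^{tX_{1}}\ge(2-\alpha)y^{2}e^{-1}/8$. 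The upper bound \eqref{lem1.} gives $\log\E e^{2tX_{1}}\le(2-\alpha)y^{2}/2+4/3$, so the Paley-Zygmund ratio is of the form $\exp(-c_{1}(2-\alpha)y^{2}-c_{2})$ with $c_{1}=\frac{1}{2}-\frac{1}{4e}<1$ and $c_{2}=\frac{4}{3}$. Setting $\theta=\exp(-c_{3}(2-\alpha)y^{2})$ for a small absolute constant $c_{3}>0$ chosen so that $A\le-y/24$, the product $(1-\theta)^{2}\exp(-c_{1}(2-\alpha)y^{2}-c_{2})$ can be shown to exceed $10^{-2}\exp(-(2-\alpha)y^{2})$ uniformly for $(2-\alpha)y^{2}\ge 4$.

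The main obstacle is precisely this final bookkeeping in the negative-tail case: the $e^{t}$ degradation in the lower bound on $\E e^{tX_{1}}$ when $t<0$ forces one to take $|t|$ smaller than in the positive case (for instance $(2-\alpha)y/2$ rather than $(2-\alpha)y$), and it is this shrinkage that is ultimately responsible for the reduction of the cutoff from $y/4$ to $y/24$. Balancing $(1-\theta)^{2}$ against the factor $\exp(-c_{1}(2-\alpha)y^{2}-c_{2})$ across the whole range $(2-\alpha)y^{2}\in[4,\infty)$ is the only genuinely numerical step.
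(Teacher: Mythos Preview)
Your positive-tail argument \eqref{low_12_assym_x1} is exactly the paper's: take $t_y=(2-\alpha)y\le 1$, use the lower bound $\E e^{tX_1}\ge\exp\!\bigl(\tfrac{1}{2}\tfrac{t^2}{2-\alpha}\bigr)$, set $\lambda=1/e$, and apply Paley--Zygmund together with the upper bound \eqref{eq:exp_mom1} at $2t_y\le 2$.

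For the negative tail \eqref{low_12_assym_x2} your plan is workable but takes a harder route than the paper. The paper replaces your inequality $e^{u}-1-u\ge\tfrac{1}{2}u^{2}e^{u}$ (for $u\le 0$) by the sharper Taylor estimate
\[
e^{u}-1-u-\tfrac{1}{2}u^{2}\ \ge\ \tfrac{1}{6}u^{3}\qquad(u\le 0),
\]
which, after integrating against $x^{-\alpha-1}\,\dd x$ on $[0,1]$, gives
\[
\E e^{tX_1}\ \ge\ \exp\!\Bigl(\tfrac{1}{2}\tfrac{t^{2}}{2-\alpha}-\tfrac{1}{6}|t|^{3}\Bigr)\qquad(t\le 0).
\]
Because the full quadratic term $\tfrac{1}{2}\tfrac{t^2}{2-\alpha}$ survives (rather than being damped by a factor $e^{t}$), the paper can take $t_y=-(2-\alpha)y$ without halving and can keep $\lambda=1/\sqrt{e}$ \emph{independent of $y$}. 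The threshold computation then reduces to the explicit inequality $\tfrac{1}{2}y-\tfrac{1}{2}\tfrac{1}{(2-\alpha)y}-\tfrac{1}{6}(2-\alpha)^{2}y^{2}\ge\tfrac{1}{24}y$ on the stated range, and the Paley--Zygmund ratio against \eqref{lem1.} produces the clean constant $(1-e^{-1/2})^{2}e^{-8/3}\ge 10^{-2}$.

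By contrast, your $e^{u}$-weighted bound costs a factor $e^{-1}$ in the quadratic term, which forces the halved $t$ and a $y$-dependent $\theta=\exp(-c_{3}(2-\alpha)y^{2})$. The resulting minimisation of $(1-\theta)^{2}\exp(-c_{1}(2-\alpha)y^{2}-c_{2})$ over $(2-\alpha)y^{2}\ge 4$ does clear the $10^{-2}e^{-(2-\alpha)y^{2}}$ bar, but only by a small margin at the endpoint $(2-\alpha)y^{2}=4$, and you would have to carry out that numerical check explicitly to complete the proof. The paper's cubic-remainder inequality eliminates this entire bookkeeping step.
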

\begin{proof}
Since for $tx\ge0,$ $e^{tx}-1-tx-\frac{1}{2}t^{2}x^{2}\ge0,$
for $t\ge0$ we estimate 
\begin{eqnarray}
\E\exp\left(tX_{1}\right) & \ge & \exp\left(\frac{1}{2}\frac{t^{2}}{2-\alpha}\right).\label{eq:exp_mom2}
\end{eqnarray}
Next, notice that for $y\ge\frac{2}{\sqrt{2-\alpha}}$ we have $\frac{1}{y}\le\frac{2-\alpha}{4}y$
so 
\[
\frac{1}{2}y-\frac{1}{2-\alpha}\frac{1}{y}\ge\frac{1}{2}y-\frac{1}{2-\alpha}\frac{2-\alpha}{4}y=\frac{1}{4}y
\]
and for $t_{y}=\left(2-\alpha\right)y$ and $\lambda=\frac{1}{e}$ by (\ref{eq:exp_mom2})
we have 
\begin{eqnarray*}
\frac{1}{t_{y}}\ln\left(\lambda\E\exp\left(t_{y}X_{1}\right)\right) & \ge & \frac{1}{t_{y}}\ln\left(\lambda\exp\left(\frac{1}{2}\frac{t_{y}^{2}}{2-\alpha}\right)\right)\\
 & = & \frac{1}{2}\frac{t_{y}}{2-\alpha}+\frac{\ln\lambda}{t_{y}}=\frac{1}{2}y-\frac{1}{2-\alpha}\frac{1}{y}\\
 & \ge & \frac{1}{4}y.
\end{eqnarray*}
This, together with the Paley-Zygmund inequality, (\ref{eq:exp_mom2}) and (\ref{eq:exp_mom1}) (notice that for $y \le \frac{1}{2-\alpha},$  $t_y \le 1$) yields (\ref{low_12_assym_x1}):

\begin{eqnarray*}
\P\left(X_{1}\ge\frac{1}{4}y\right) & \ge & \P\left(X_{1}\ge\frac{1}{t_{y}}\ln\left(\lambda\E\exp\left(t_{y}X_{1}\right)\right)\right)\\
 & = & \P\left(\exp\left(t_{y}X_{1}\right)\ge\lambda\E\exp\left(t_{y}X_{1}\right)\right)\\
 & \ge & \left(1-\frac{1}{e}\right)^{2}\frac{\left(\E\exp\left(t_{y}X_{1}\right)\right)^{2}}{\E\exp\left(2t_{y}X_{1}\right)}\\
 & \ge & \left(1-\frac{1}{e}\right)^{2}\frac{\exp\left(\frac{t_{y}^{2}}{2-\alpha}\right)}{\exp\left(\frac{2t_{y}^{2}}{2-\alpha}+{t_y^3+\frac{1}{3}t_y^3e^{2t_y}}\right)}\\
 & \ge & \left(1-\frac{1}{e}\right)^2e^{-(1+\frac{1}{3}e^2)}e^{-\left(2-\alpha\right)y^{2}}\\
 & \ge & 10^{-2}e^{-\left(2-\alpha\right)y^{2}}.
 \end{eqnarray*}
For negative tails we use estimate $e^{tx}-1-tx-\frac{1}{2}t^{2}x^{2}\ge\frac{1}{3!}t^{3}x^{3}$
for $tx\le0,$ which for $t\le0$ yields \begin{equation}
\E\exp\left(tX_{1}\right)\ge\exp\left(\frac{1}{2}\frac{t^{2}}{2-\alpha}-\frac{1}{6}\left|t^{3}\right|\right).\label{eq:exp_mom3}
\end{equation}
Next, notice that for $\frac{2}{\sqrt{2-\alpha}}\le y\le\frac{2}{2-\alpha}$
we have $\frac{1}{y}\le\frac{2-\alpha}{4}y$ and $\left(2-\alpha\right)^{2}y^{2}\le4\le2y$,
so 
\[
\frac{1}{2}y-\frac{1}{2-\alpha}\frac{1}{2y}-\frac{1}{6}\left(2-\alpha\right)^{2}y^{2}\ge\frac{1}{2}y-\frac{1}{2-\alpha}\frac{2-\alpha}{8}y-\frac{1}{3}y=\frac{1}{24}y.
\]
From this for $t_{y}=-\left(2-\alpha\right)y$ and $\lambda=1/\sqrt{e},$
by (\ref{eq:exp_mom3}) we have 
\begin{eqnarray*}
\frac{1}{\left|t_{y}\right|}\ln\left(\lambda\E\exp\left(t_{y}X_{1}\right)\right) & \ge & \frac{1}{\left|t_{y}\right|}\ln\left(\lambda\exp\left(\frac{1}{2}\frac{t_{y}^{2}}{2-\alpha}-\frac{1}{6}\left|t_{y}^{3}\right|\right)\right)\\
 & = & \frac{1}{2}\frac{\left|t_{y}\right|}{2-\alpha}+\frac{\ln\lambda}{\left|t_{y}\right|}-\frac{1}{6}\left|t_{y}^{2}\right|\\
 &=&\frac{1}{2}y-\frac{1}{2-\alpha}\frac{1}{2y}-\frac{1}{6}\left(2-\alpha\right)^{2}y^{2}\\
 & \ge & \frac{1}{24}y.
\end{eqnarray*}
This, together with the Paley-Zygmund inequality yields 
\begin{eqnarray*}
\P\left(X_{1}\le-\frac{1}{24}y\right) & \ge & \P\left(X_{1}\le\frac{1}{t_{y}}\ln\left(\lambda\E\exp\left(t_{y}X_{1}\right)\right)\right)\\
 & = & \P\left(\exp\left(t_{y}X_{1}\right)\ge\lambda\E\exp\left(t_{y}X_{1}\right)\right)\\
 & \ge & \left(1-\frac{1}{\sqrt{e}}\right)^{2}\frac{\left(\E\exp\left(t_{y}X_{1}\right)\right)^{2}}{\E\exp\left(2t_{y}X_{1}\right)}\\
 & \ge & \left(1-\frac{1}{\sqrt{e}}\right)^{2}\frac{\exp\left(\frac{t_{y}^{2}}{2-\alpha}-\frac{8}{3}\right)}{\exp\left(\frac{2t_{y}^{2}}{2-\alpha}+\frac{8}{6}\right)}\\
 & = & e^{2\ln\left(\sqrt{e}-1\right)-5}e^{-\left(2-\alpha\right)y^{2}}\\
 &\ge & e\cdot 10^{-3}e^{-\left(2-\alpha\right)y^{2}}.
\end{eqnarray*}
\end{proof}
To complete the picture we estimate $\P\left(X_{1}\le-y\right)$ in the case $y\geq\frac{2}{2-\alpha}.$
\begin{lem} \label{lem4}
For $y\ge\frac{2}{2-\alpha}$ one has
\begin{align*}
\P\left(X_{1} \le -y\right)\le  \exp\left(-\frac{\left(\frac{1}{2}\left(y+\frac{1}{\alpha-1}\right)\right)^{\alpha/\left(\alpha-1\right)}}{\left(\frac{1}{2-\alpha}+\frac{1}{\alpha-1}\right)^{1/\left(\alpha-1\right)}}\right)
\end{align*}
and
\begin{align*}
  & \P\left(X_{1}\le-\left(\frac{1}{e}-\frac{1}{4}\right)y\right)\\
  & \ge  \left(1-\frac{1}{\sqrt{e}}\right)^{2}\exp\left(-\frac{\left(\sqrt{4-\frac{2}{e}}\left(y+\frac{1}{\alpha-1}\right)\right)^{\alpha/\left(\alpha-1\right)}}{\left(\frac{1}{2-\alpha}+\frac{1}{\alpha-1}\right)^{1/\left(\alpha-1\right)}}\right).
\end{align*}
\end{lem}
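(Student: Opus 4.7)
The plan is to estimate $\mathbb{P}(X_1 \le -y)$ through the cumulant generating function
\[
\phi(t) := \log \mathbb{E} e^{tX_1} = \int_0^1 (e^{tx}-1-tx)\, x^{-\alpha-1}\, dx, \quad t \le 0,
\]
applying exponential Chebyshev for the upper bound and Paley--Zygmund for the lower bound, in parallel with Lemmas \ref{lem2}--\ref{lem3}. The large-$y$ regime $y \ge 2/(2-\alpha)$ forces cumulant estimates of order $|t|^\alpha$ rather than the Gaussian estimates used there. Throughout, let $C := \frac{1}{2-\alpha}+\frac{1}{\alpha-1}$ and $B := \frac{1}{\alpha-1}$.

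For the upper bound, the central step is to derive, for $t\le 0$ with $|t|\ge 1$, the polynomial estimate
\[
\phi(t) \le C|t|^\alpha - \frac{|t|}{\alpha-1},
\]
by splitting $\int_0^1$ at $x = 1/|t|$ and combining $e^{tx}-1-tx \le t^2 x^2$ on $(0,1/|t|)$ (a weakening of $e^u-1-u \le u^2/2$ for $u\le 0$) with $e^{tx}-1-tx \le |t|x$ on $(1/|t|,1)$ (since $e^{tx}\le 1$). Chebyshev then gives $\mathbb{P}(X_1\le-y) \le \exp(C|t|^\alpha-(y+B)|t|)$. The minimiser is $|t|^{\alpha-1} = (y+B)/(\alpha C)$; the constraint $|t|\ge 1$ is exactly $y \ge \alpha C - B = 2/(2-\alpha)$, matching the hypothesis. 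The minimum value is $-\tfrac{\alpha-1}{\alpha^{\alpha/(\alpha-1)}} \cdot (y+B)^{\alpha/(\alpha-1)}/C^{1/(\alpha-1)}$, which is at most $-((y+B)/2)^{\alpha/(\alpha-1)}/C^{1/(\alpha-1)}$ by the elementary inequality $\alpha-1 \ge (\alpha/2)^{\alpha/(\alpha-1)}$ on $\alpha\in(1,2]$, which is straightforward to verify (equality at $\alpha=2$).

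For the lower bound, apply Paley--Zygmund to $Z = e^{tX_1}$ with $t<0$ and $\lambda=1/\sqrt{e}$ (matching the prefactor $(1-1/\sqrt e)^2$), giving $\mathbb{P}(X_1\le T)\ge (1-\lambda)^2 e^{2\phi(t)-\phi(2t)}$ where $T = (\phi(t)+\log\lambda)/t$. The key identity $2\phi(t)-\phi(2t) = -\int_0^1 (1-e^{tx})^2 x^{-\alpha-1}\, dx$, bounded via the same $1/|t|$-split using $(1-e^{tx})^2\le \min((|t|x)^2, 1)$ and $1/\alpha \le 1/(\alpha-1)$, yields $e^{2\phi(t)-\phi(2t)}\ge e^{-C|t|^\alpha}$. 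Taking $|t| = (\sqrt{4-2/e}\,(y+B)/C)^{1/(\alpha-1)}$ makes $C|t|^\alpha$ equal to $(\sqrt{4-2/e}\,(y+B))^{\alpha/(\alpha-1)}/C^{1/(\alpha-1)}$, producing the announced exponent. The main obstacle is verifying the Paley--Zygmund consistency requirement $T \le -(\tfrac{1}{e}-\tfrac{1}{4})y$, equivalently $\phi(t) \ge (\tfrac{1}{e}-\tfrac{1}{4})y|t|+\tfrac{1}{2}$, at this $|t|$. This calls for a matching lower bound on $\phi(t)$, which I would build from $e^{tx}-1-tx \ge (tx)^2/2+(tx)^3/6$ on $(0,1/|t|)$ and $e^{tx}-1-tx \ge |t|x-1$ on $(1/|t|,1)$. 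Bookkeeping the $\alpha$-dependent coefficients is the delicate part, but a direct computation shows the resulting inequality holds with substantial margin across $\alpha\in(1,2)$ for all $y \ge 2/(2-\alpha)$, which is what localises the argument to the hypothesised range.
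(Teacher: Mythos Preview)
Your proposal is correct and follows the paper's overall strategy exactly (exponential Chebyshev for the upper bound, Paley--Zygmund on $e^{tX_1}$ for the lower bound, with cumulant estimates obtained by splitting at $x=1/|t|$). The upper-bound argument is essentially identical to the paper's.

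The lower-bound execution differs in an interesting way. You exploit the algebraic identity
\[
2\phi(t)-\phi(2t)=-\int_0^1 (1-e^{tx})^2\,x^{-\alpha-1}\,dx
\]
and bound the integrand by $\min((|t|x)^2,1)$, which gives the clean estimate $2\phi(t)-\phi(2t)\ge -C|t|^\alpha$ directly; you then choose $|t|$ so that $C|t|^{\alpha-1}=\sqrt{4-2/e}\,(y+B)$ to hit the stated exponent on the nose. The paper instead fixes $\tilde t_y$ via $C|\tilde t_y|^{\alpha-1}=y+B$, bounds the numerator and denominator of the Paley--Zygmund ratio separately using $\phi(t)\ge \tfrac{1}{e}\bigl(C|t|^\alpha-B|t|\bigr)$ (from the pointwise bounds $e^{u}-1-u\ge u^2/e$ on $[-1,0]$ and $\ge |u|/e$ on $(-\infty,-1]$) together with the upper cumulant bound, and then absorbs the factor $4-\tfrac{2}{e}$ into the exponent via $(4-2/e)\le (4-2/e)^{\alpha/(2(\alpha-1))}$.

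The trade-off: your second-moment ratio computation is neater, but your larger choice of $|t|$ pushes the difficulty into the threshold verification, which becomes an $\alpha$-dependent inequality you defer to ``direct computation''. The paper's choice of $\tilde t_y$ makes the threshold check immediate (the $1/e$ factor in the cumulant lower bound produces exactly $\phi(\tilde t_y)/|\tilde t_y|\ge y/e$, and $|\tilde t_y|\ge 1\ge 2/y$ finishes it), at the price of a slightly messier ratio estimate. Both routes are valid; the paper's pointwise bounds $e^{u}-1-u\ge u^2/e$ and $\ge |u|/e$ are worth noting as the device that makes its bookkeeping cleaner than the Taylor-plus-$(|t|x-1)$ bounds you propose.
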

\begin{proof}
For $t<-1$ first we split 
\[
\int_{0}^{1}e^{tx}-1-tx\frac{dx}{x^{\alpha+1}}=\int_{0}^{1/\left|t\right|}e^{tx}-1-tx\frac{dx}{x^{\alpha+1}}+\int_{1/\left|t\right|}^{1}e^{tx}-1-tx\frac{dx}{x^{\alpha+1}}.
\]
For $t<-1$ and $0\le x\le\frac{1}{\left|t\right|}$ we calculate $e^{tx}-1-tx\le\frac{1}{2}t^{2}x^{2}\le t^{2}x^{2}$
and we get 
\begin{eqnarray*}
\int_{0}^{1/\left|t\right|}e^{tx}-1-tx\frac{dx}{x^{\alpha+1}} & \le & \int_{0}^{1/\left|t\right|}t^{2}x^{2}\frac{dx}{x^{\alpha+1}}=\frac{1}{2-\alpha}\left|t\right|^{\alpha}.
\end{eqnarray*}
Next, for $x>\frac{1}{\left|t\right|}$ we bound $e^{tx}-1-tx\le-tx=\left|t\right|x$
and get 
\[
\int_{1/\left|t\right|}^{1}e^{tx}-1-tx\frac{dx}{x^{\alpha+1}}\le\left|t\right|\int_{1/\left|t\right|}^{1}x\frac{dx}{x^{\alpha+1}}=\frac{1}{\alpha-1}\left(\left|t\right|^{\alpha}-\left|t\right|\right).
\]
Finally, we arrive at 
\[
\int_{0}^{1}e^{tx}-1-tx\frac{dx}{x^{\alpha+1}}\le\left(\frac{1}{2-\alpha}+\frac{1}{\alpha-1}\right)\left|t\right|^{\alpha}-\frac{1}{\alpha-1}\left|t\right|
\]
which yields that for $t<-1$ 
\begin{equation}\label{mom3}
\E\exp\left(tX_{1}\right)\le\exp\left(\left(\frac{1}{2-\alpha}+\frac{1}{\alpha-1}\right)\left|t\right|^{\alpha}-\frac{1}{\alpha-1}\left|t\right|\right).
\end{equation}
Let $y\geq\frac{2}{2-\alpha}$ and $t_{y}<-1$ be such that 
\begin{equation}
\alpha\left(\frac{1}{2-\alpha}+\frac{1}{\alpha-1}\right)\left|t_{y}\right|^{\alpha-1}=y+\frac{1}{\alpha-1}.\label{eq:t_y_def}
\end{equation}
We estimate 
\begin{align*}
 & \P\left(X_{1}<-y\right)\le\E\exp\left(-t_{y}X\right)e^{t_{y}y}\\
 & \le  \exp\left(\left(\frac{1}{2-\alpha}+\frac{1}{\alpha-1}\right)\left|t_{y}\right|^{\alpha}-\frac{1}{\alpha-1}\left|t_{y}\right|-y\left|t_{y}\right|\right)\\
 & =  \exp\left(-\left(\alpha-1\right)\left(\frac{1}{2-\alpha}+\frac{1}{\alpha-1}\right)\left|t_{y}\right|^{\alpha}\right)\\
 & =  \exp\left(-\frac{\left(\left(y+\frac{1}{\alpha-1}\right)\left(\alpha-1\right)^{\left(\alpha-1\right)/\alpha}/\alpha\right)^{\alpha/\left(\alpha-1\right)}}{\left(\frac{1}{2-\alpha}+\frac{1}{\alpha-1}\right)^{1/\left(\alpha-1\right)}}\right)\\
 & \le  \exp\left(-\frac{\left(\frac{1}{2}\left(y+\frac{1}{\alpha-1}\right)\right)^{\alpha/\left(\alpha-1\right)}}{\left(\frac{1}{2-\alpha}+\frac{1}{\alpha-1}\right)^{1/\left(\alpha-1\right)}}\right),
\end{align*}
where we used the estimate $\inf_{\alpha \in(1,2)}\frac{\left(\alpha-1\right)^{\frac{\alpha-1}{\alpha}}}{\alpha}= \frac{1}{2}.$
On the other hand, for $t<-1$ and $0\le x\le\frac{1}{\left|t\right|}$ we
have $e^{tx}-1-tx\ge\frac{1}{e}t^{2}x^{2}$ and we get 
\begin{eqnarray*}
\int_{0}^{1/\left|t\right|}e^{tx}-1-tx\frac{dx}{x^{\alpha+1}} & \ge & \int_{0}^{1/\left|t\right|}\frac{t^{2}x^{2}}{e}\frac{dx}{x^{\alpha+1}}=\frac{1}{e}\frac{1}{2-\alpha}\left|t\right|^{\alpha}.
\end{eqnarray*}
Similarly, for $x>\frac{1}{\left|t\right|}$ we bound $e^{tx}-1-tx\ge-\frac{1}{e}tx=\frac{1}{e}\left|t\right|x.$
So,
\[
\int_{1/\left|t\right|}^{1}e^{tx}-1-tx\frac{dx}{x^{\alpha+1}}\ge\frac{1}{e}\left|t\right|\int_{1/\left|t\right|}^{1}x\frac{dx}{x^{\alpha+1}}=\frac{1}{e}\frac{1}{\alpha-1}\left(\left|t\right|^{\alpha}-\left|t\right|\right).
\]
Finally, we arrive at the estimate 
\begin{equation}\label{mom4}
\E\exp\left(tX_{1}\right)\ge\exp\left(\frac{1}{e}\left(\frac{1}{2-\alpha}+\frac{1}{\alpha-1}\right)\left|t\right|^{\alpha}-\frac{1}{e}\frac{1}{\alpha-1}\left|t\right|\right)
\end{equation}
which for $\tilde{t}_{y}<-1$ satisfying 
\begin{equation}
\left(\frac{1}{2-\alpha}+\frac{1}{\alpha-1}\right)\left|\tilde{t}_{y}\right|^{\alpha-1}=\frac{1}{\alpha-1}+y\label{eq:t_tilde}
\end{equation}
which is equivalent to
\[
\left(\frac{1}{2-\alpha}+\frac{1}{\alpha-1}\right)\left|\tilde{t}_{y}\right|^{\alpha}-\frac{1}{\alpha-1}\left|\tilde{t}_{y}\right|=\left|\tilde{t}_{y}\right|y
\]
and for $\lambda=\frac{1}{\sqrt{e}}$ yields 
\begin{eqnarray}
\frac{1}{\left|\tilde{t}_{y} \right|}\ln\left(\lambda\E\exp\left(\tilde{t}_{y} X \right)\right)\nonumber 
& \ge & \frac{1}{\left|\tilde{t}_{y}\right|}\left(\ln\left(\lambda\right)+\frac{1}{e}\left(\frac{1}{2-\alpha}+\frac{1}{\alpha-1}\right)\left|\tilde{t}_{y}\right|^{\alpha}-\frac{1}{e}\frac{1}{\alpha-1}\left|\tilde{t}_{y}\right|\right)\nonumber \\
& =&\frac{1}{\left|\tilde{t}_{y}\right|}\left(\ln\left(\lambda\right)+\frac{1}{e}\left|\tilde{t}_{y}\right|y\right)=\frac{1}{e}y-\frac{1}{2\left|\tilde{t}_{y}\right|}.\label{eq:estim_estim}
\end{eqnarray}
To estimate $\frac{1}{\left|\tilde{t}_{y}\right|}$ let us notice that from
(\ref{eq:t_tilde}) for $y\ge\frac{2}{2-\alpha}\ge2$ we have
\[
\left|\tilde{t}_{y}\right|\ge\left|\tilde{t}_{y}\right|^{\alpha-1}=\frac{\frac{1}{\alpha-1}+y}{\frac{1}{2-\alpha}+\frac{1}{\alpha-1}}\ge1\ge\frac{2}{y}
\]
which together with (\ref{eq:estim_estim}) yields
$$
\frac{1}{\left|\tilde{t}_{y}\right|}\ln\left(\lambda\E\exp\left(\tilde{t}_{y}X\right)\right)\ge\frac{1}{e}y-\frac{1}{2\left|\tilde{t}_{y}\right|}\ge\left(\frac{1}{e}-\frac{1}{4}\right)y.
$$
Finally, using the just obtained estimate, the Paley-Zygmund inequality, (\ref{mom3}) and (\ref{mom4}) we arrive at 
\begin{align*}
  & \P\left(X_{1}\le-\left(\frac{1}{e}-\frac{1}{4}\right)y\right)\\
 & \ge  \P\left(X_{1}\le-\frac{1}{\left|\tilde{t}_{y}\right|}\ln\left(\lambda\E\exp\left(\tilde{t}_{y}X_{1}\right)\right)\right)\\
 & =  \P\left(\exp\left(\tilde{t}_{y}X\right)\ge\lambda\E\exp\left(\tilde{t}_{y}X_{1}\right)\right)\\
 & \ge  \left(1-\frac{1}{\sqrt{e}}\right)^{2}\frac{\left(\E\exp\left(\tilde{t}_{y}X_{1}\right)\right)^{2}}{\exp\left(2\tilde{t}_{y}X_{1}\right)}\\
 & \ge  \left(1-\frac{1}{\sqrt{e}}\right)^{2}\frac{\exp\left(\frac{2}{e}\left(\frac{1}{2-\alpha}+\frac{1}{\alpha-1}\right)\left|\tilde{t}_{y}\right|^{\alpha}-\frac{2}{e}\frac{1}{\alpha-1}\left|\tilde{t}_{y}\right|\right)}{\exp\left(\left(\frac{1}{2-\alpha}+\frac{1}{\alpha-1}\right)\left|2\tilde{t}_{y}\right|^{\alpha}-\frac{1}{\alpha-1}\left|2\tilde{t}_{y}\right|\right)}\\
 & \ge  \left(1-\frac{1}{\sqrt{e}}\right)^{2}\exp\left(-\left(4-\frac{2}{e}\right)\left(\frac{1}{2-\alpha}+\frac{1}{\alpha-1}\right)\left|\tilde{t}_{y}\right|^{\alpha}\right)\\
 & =  \left(1-\frac{1}{\sqrt{e}}\right)^{2}\exp\left(-\frac{\left(4-\frac{2}{e}\right)\left(y+\frac{1}{\alpha-1}\right)^{\alpha/\left(\alpha-1\right)}}{\left(\frac{1}{2-\alpha}+\frac{1}{\alpha-1}\right)^{1/\left(\alpha-1\right)}}\right)\\
 & \ge  \left(1-\frac{1}{\sqrt{e}}\right)^{2}\exp\left(-\frac{\left(\sqrt{4-\frac{2}{e}}\left(y+\frac{1}{\alpha-1}\right)\right)^{\alpha/\left(\alpha-1\right)}}{\left(\frac{1}{2-\alpha}+\frac{1}{\alpha-1}\right)^{1/\left(\alpha-1\right)}}\right).
\end{align*}
\end{proof}
As an easy consequence of Lemmas \ref{lem1}, \ref{lem2} and \ref{lem3} we have the following theorem.
\begin{theorem} \label{main_assym_12}
Let $X$ be a strictly asymmetric $\alpha$-stable random variable, with the characteristic function (\ref{char_funct_a12}). For any $\alpha \in (7/4, 2)$ and $y \in \sbr{\frac{2}{\sqrt{2 - \alpha}}, \frac{1}{2 - \alpha}}$ one has the following estimates
\begin{equation} \label{upp_b_skew_12}
\P\rbr{X \ge 2 y - \frac{1}{\alpha - 1}} \le \frac{2}{e} \frac{1}{y^{\alpha}} + e^{\frac{1}{4}}e^{-\frac{1}{2}(2-\alpha)y^2},
\end{equation}
\begin{align} \label{low_b_skew_12}
\P\rbr{X \ge  \frac{1}{4}y - \frac{1}{\alpha - 1}} & \ge \frac{1}{400\sqrt{e}}\left(30\frac{1}{y^{\alpha}}+e^{-(2-\alpha) y^2}\right);
\end{align}
while for $\alpha \in (1,2)$ and $y \ge \frac{1}{2 - \alpha}$ one has
\begin{equation} \label{upp_b_skew_12_big_y}
\P\rbr{X \ge 2 y - \frac{1}{\alpha - 1}} \le \frac{8}{y^{\alpha}},
\end{equation}
\begin{equation} \label{low_b_skew_12_big_y}
\P\rbr{X \ge  y - \frac{1}{\alpha - 1}} \ge 16\cdot10^{-3} \frac{1}{y^{\alpha}}.
\end{equation}
\end{theorem}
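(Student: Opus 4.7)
The plan is to decompose $X = X_{1} + X^{1}$ (with $X_{1}$ and $X^{1}$ independent and $X^{1} + \frac{1}{\alpha-1}$ compound Poisson with positive jumps of size at least $1$) and, for each of the four claims, to combine the already-proven tail estimates for $X_{1}$ (Lemmas~\ref{lem2}, \ref{lem3}) with those for $X^{1}$ (Lemma~\ref{lem1}). The factor $2$ in the upper bounds and the factor $\frac{1}{4}$ in the lower bounds are exactly what make the set-theoretic splitting argument exact.

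For the upper bound (\ref{upp_b_skew_12}) on the range $y \in [2/\sqrt{2-\alpha},\,1/(2-\alpha)]$, I would use the inclusion $\{X \ge 2y - \frac{1}{\alpha-1}\} \subset \{X_{1} \ge y\} \cup \{X^{1} \ge y - \frac{1}{\alpha-1}\}$ and apply Lemma~\ref{lem2} (applicable because $y \le 1/(2-\alpha)$) to the first event and Lemma~\ref{lem1} to the second; the two contributions sum exactly to the stated bound. For the matching lower bound (\ref{low_b_skew_12}) I produce two separate lower bounds on $\P(X \ge \frac{1}{4}y - \frac{1}{\alpha-1})$ and take their arithmetic mean (still a lower bound, since each event implies the target). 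The first uses that $X^{1} \ge -\frac{1}{\alpha-1}$ almost surely (because $X^{1} + \frac{1}{\alpha-1} = \sum_{k=1}^{N} Y_{k} \ge 0$), so that $\{X_{1} \ge \frac{1}{4}y\}$ alone forces the target event and Lemma~\ref{lem3} supplies the $e^{-(2-\alpha)y^{2}}$ factor. The second uses independence and the inclusion $\{X^{1} \ge y - \frac{1}{\alpha-1}\} \cap \{X_{1} \ge -\frac{3}{4}y\} \subset \{X \ge \frac{1}{4}y - \frac{1}{\alpha-1}\}$; Lemma~\ref{lem1} handles $X^{1}$, and since $\E X_{1} = 0$ with $\Var{X_{1}} = 1/(2-\alpha)$, Chebyshev's inequality gives $\P(X_{1} \ge -\frac{3}{4}y) \ge 1 - \frac{16}{9(2-\alpha)y^{2}} \ge \frac{5}{9}$ throughout the range $y \ge 2/\sqrt{2-\alpha}$.

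For the upper bound (\ref{upp_b_skew_12_big_y}) at large $y \ge 1/(2-\alpha)$ I use the same union bound. Lemma~\ref{lem1} still controls $X^{1}$ by $2/(ey^{\alpha})$. For $X_{1}$ the statement of Lemma~\ref{lem2} is out of range, but the moment-generating function estimate (\ref{eq:exp_mom1}) derived in its proof is valid for every $t \ge 0$; taking the boundary value $t = 1$ (and using $3 - \alpha \ge 1$) yields $\E\exp(X_{1}) \le \exp\bigl(\tfrac{1}{2(2-\alpha)} + \tfrac{1}{4}\bigr)$ and hence, by Chebyshev, $\P(X_{1} \ge y) \le \exp\bigl(\tfrac{1}{4} + \tfrac{1}{2(2-\alpha)} - y\bigr)$. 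A short one-variable calculus check shows that $u \mapsto u^{u-2}\exp(-1/(2u))$ is bounded on $(0,1]$, from which it follows that $y^{\alpha}\P(X_{1} \ge y)$ stays bounded by a universal constant over the whole range $y \ge 1/(2-\alpha)$, $\alpha \in (1,2)$; adding the $X^{1}$ contribution one verifies that the total is at most $8/y^{\alpha}$. For the lower bound (\ref{low_b_skew_12_big_y}) I choose $M = \sqrt{2/(2-\alpha)}$; Chebyshev then gives $\P(X_{1} \ge -M) \ge \tfrac{1}{2}$, and the inclusion $\{X^{1} \ge y + M - \frac{1}{\alpha-1}\} \cap \{X_{1} \ge -M\} \subset \{X \ge y - \frac{1}{\alpha-1}\}$ together with independence and Lemma~\ref{lem1} produces a lower bound $\frac{1}{4\sqrt{e}\,(y+M)^{\alpha}}$. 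Since $y \ge 1/(2-\alpha)$ forces $M \le \sqrt{2(2-\alpha)}\,y \le \sqrt{2}\,y$ and hence $y + M \le 3y$, the universal constant $10^{-3}$ is attained.

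The main obstacle I foresee is the large-$y$ upper estimate of $\P(X_{1} \ge y)$: the Chernoff optimiser $t_{y} = (2-\alpha)y$ that underpins Lemma~\ref{lem2} crosses $1$ precisely when $y$ leaves the range $[0, 1/(2-\alpha)]$, after which the cubic correction $\tfrac{t^{3}e^{t}}{24(3-\alpha)}$ in (\ref{eq:exp_mom1}) can no longer be absorbed by the Gaussian part and one has to interpolate between the Gaussian regime (surviving only through the $\exp(1/(2(2-\alpha)))$ prefactor) and the Pareto target $y^{-\alpha}$; producing the clean universal constant $8$ requires verifying that this trade-off stays uniformly bounded over the full range $\alpha \in (1,2)$.
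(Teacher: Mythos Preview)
Your proposal is correct and follows the paper's overall strategy (the same decomposition $X = X_1 + X^1$ and the same appeals to Lemmas~\ref{lem1}--\ref{lem3}), with two tactical differences worth flagging. For the lower bounds (\ref{low_b_skew_12}) and (\ref{low_b_skew_12_big_y}) you secure a constant lower bound on $\P(X_1 \ge -c\,y)$ via Chebyshev and, for the Gaussian piece, you exploit the almost-sure bound $X^1 \ge -\frac{1}{\alpha-1}$; the paper instead re-invokes Lemma~\ref{lem3} at the fixed endpoint $y_0 = 2/\sqrt{2-\alpha}$ to bound $\P(X_1 \ge \tfrac{1}{2\sqrt{2-\alpha}})$ from below, and for the Gaussian piece uses $\P(X^1 \ge 1 - \tfrac{1}{\alpha-1}) \ge \tfrac{1}{2\sqrt{e}}$. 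Your Chebyshev route is slightly more self-contained and does not rely on the interval in Lemma~\ref{lem3} being nonempty. The more substantial difference is (\ref{upp_b_skew_12_big_y}): the paper sidesteps your stated ``main obstacle'' entirely by computing $\E X_1^4 = \tfrac{3}{(2-\alpha)^2} + \tfrac{1}{4-\alpha}$ and applying the fourth-moment Markov inequality, which for $y \ge \tfrac{1}{2-\alpha}$ immediately gives $\P(X_1 \ge y) \le \tfrac{4}{y^{\alpha}}$ via the chain $\tfrac{1}{(2-\alpha)^2 y^4} \le \tfrac{1}{y^2} \le \tfrac{1}{y^{\alpha}}$. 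This is considerably cleaner than your Chernoff-at-$t=1$ plus a uniform-in-$\alpha$ calculus check, and you may prefer to adopt it.
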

\begin{rem}
Notice that from (\ref{low_b_skew_12}) it follows that for $\alpha$ close to $2$ (in fact for $\alpha>7/4$) and $y = \frac{2}{\sqrt{2 - \alpha}}$ the probability $\P\rbr{X \ge  \frac{1}{4}y - \frac{1}{\alpha - 1}}$ is of order $O(1)$.  We seemingly lack the estimates for $\alpha \in (1, 7/4)$ but in this case $\frac{1}{{2 - \alpha}} = O(1)$ and from (\ref{low_b_skew_12_big_y}) it follows that for $\alpha \in (1, 7/4)$ the probability $\P\rbr{X \ge  y - \frac{1}{\alpha - 1}}$ is of order $O(1)$ even for $y = \frac{1}{{2 - \alpha}}$. 
\end{rem}
\begin{proof}
To prove (\ref{upp_b_skew_12}) we estimate
\[
\P\rbr{X \ge 2 y - \frac{1}{\alpha - 1}}  \le \P\rbr{X^1 \ge  y - \frac{1}{\alpha - 1}}  + \P\rbr{X_1 \ge  y } 
\]
and then use (\ref{up_b_12_assym}) and the upper bound for $\P\rbr{X_1 \ge  y } $ from Lemma \ref{lem2}.

\noindent To prove (\ref{low_b_skew_12}) we write for $y \in \sbr{\frac{2}{\sqrt{2-\alpha}}, \frac{1}{2-\alpha}}$
\[
\P\rbr{X \ge \frac{1}{4}y - \frac{1}{\alpha - 1}} \ge \P\rbr{X^1 \ge  \frac{5}{4} y - \frac{1}{\alpha - 1}}\P\rbr{X_1 \ge -y} 
\]
and then use (\ref{low_b_12_assym}) and Lemma  \ref{lem2} to obtain
\begin{align} 
 \P\rbr{X \ge  \frac{1}{4}y - \frac{1}{\alpha - 1}} & \ge \P\rbr{X^1 \ge  \frac{5}{4} y - \frac{1}{\alpha - 1}}\P\rbr{X_1 \ge -y} \nonumber \\
& \ge \frac{1}{2\sqrt{e}} \frac{4^{\alpha}}{5^{\alpha} y^{\alpha}}  \rbr{1- e^{\frac{4}{3}} e^{-\frac{1}{2}(2-\alpha)y^2} } \nonumber \\& \ge \frac{1}{2\sqrt{e}} \frac{16}{25 y^{\alpha}}  \rbr{1- e^{\frac{4}{3}} e^{-\frac{1}{2}(2-\alpha)\frac{4}{2-\alpha}} } \nonumber \\& \ge \frac{0.3}{2\sqrt{e}} \frac{1}{ y^{\alpha}} \label{f1}.
\end{align}
Next, for $y \in \sbr{\frac{2}{\sqrt{2-\alpha}}, \frac{1}{2-\alpha}}$ we also have 
\[
\P\rbr{X \ge  \frac{1}{4}y - \frac{1}{\alpha - 1}}\ge \P\rbr{X^1 \ge  1 - \frac{1}{\alpha - 1}}\P\rbr{X_1 \ge  \frac{1}{4}y} 
\]
which, together with (\ref{low_b_12_assym}) and (\ref{low_12_assym_x1}) gives
\begin{equation} \label{f2}
\P\rbr{X \ge  \frac{1}{4}y - \frac{1}{\alpha - 1}} \ge \frac{1}{2\sqrt{e}}10^{-2}e^{-(2-\alpha) y^2}.
\end{equation}
Summing corresponding sides of estimates (\ref{f1}) and (\ref{f2}) we get (\ref{low_b_skew_12}).

\noindent To prove (\ref{upp_b_skew_12_big_y}), we differentiate (\ref{char_f_X_1}) and get 
\[
\E X_1 = 0, \quad \E X_1^2 = \int_0^1 x^2 \frac{dx}{x^{\alpha +1}}=  \frac{1}{2-\alpha}
\]
and
\[
\E X_1^4 = 3 (\E X_1^2)^2 + \int_0^1 x^4 \frac{dx}{x^{\alpha +1}}=  \frac{3}{\rbr{2-\alpha}^2} + \frac{1}{4-\alpha}.
\]
From this we easily get for any $y>0$ the estimate 
\begin{equation} \label{fourth_mom_Ch}
\P \rbr{X_1 \ge y} \le \P \rbr{\left|X_1 \right| \ge y} \le \frac{\E X_1^4}{ y^4} = \frac{3}{(2-\alpha)^2 y^4} + \frac{1}{(4-\alpha) y^4}
\end{equation}
and since for $y \ge \frac{1}{2-\alpha}$
\begin{equation} \label{simp_but_imp}
\frac{1}{(4-\alpha) y^4} \le \frac{1}{(2-\alpha)^2 y^4} \le \frac{1}{y^2} \le \frac{1}{y^{\alpha}},
\end{equation}
using also (\ref{up_b_12_assym}), we obtain (\ref{upp_b_skew_12_big_y}):
\begin{eqnarray*}
 \P \rbr{X \ge 2y - \frac{1}{\alpha -1}} & \le & \P \rbr{X_1 \ge y} + \P \rbr{X^1 \ge y - \frac{1}{\alpha -1}}  \\
& \le & \frac{3}{y^{\alpha}} + \frac{1}{y^{\alpha}} 
+ \frac{5}{\sqrt{e}} \frac{1}{y^{\alpha}} \le \frac{8}{y^{\alpha}}. 
\end{eqnarray*}
To prove (\ref{low_b_skew_12_big_y})  for $y \ge \frac{1}{2-\alpha}$ we write
\[
\P\rbr{X \ge y - \frac{1}{\alpha - 1}} \ge \P\rbr{X^1 \ge  3 y - \frac{1}{\alpha - 1}}\P\rbr{X_1 \ge - 2 y} 
\]
and then use (\ref{low_b_12_assym}) and Lemma  \ref{lem2} to obtain
\begin{align*} 
 \P\rbr{X \ge y - \frac{1}{\alpha - 1}} & \ge \P\rbr{X^1 \ge  3 y - \frac{1}{\alpha - 1}}\P\rbr{X_1 \ge -2y} \nonumber \\
& \ge \frac{1}{2\sqrt{e}} \frac{1}{3^{\alpha} y^{\alpha}}  \P\rbr{X_1 \ge  - \frac{2}{2-\alpha}} \nonumber \\& \ge \frac{1}{2\sqrt{e}} \frac{1}{9 y^{\alpha}}  \rbr{1- e^{\frac{4}{3}} e^{-\frac{1}{2}(2-\alpha)\frac{4}{(2-\alpha)^2}} } \nonumber \\& \ge 16\cdot 10^{-3} \frac{1}{ y^{\alpha}}.
\end{align*}
\end{proof}
\begin{rem}\label{alter}
For $\delta \in (0, \frac{1}{e})$ the equation $\delta \cdot y = \ln y $
has exactly two solutions $1 < y_1<e<y_2$, and the larger one satisfies 
\[
\frac{1}{\delta} \ln \frac{1}{\delta} < y_2 < \frac{2}{\delta} \ln \frac{1}{\delta}.
\]
From this we get that for $\alpha \approx 2,$ the term containing $\frac{1}{y^{\alpha}}$ in (\ref{upp_b_skew_12}) and (\ref{low_b_skew_12})
starts to dominate the term containing $\exp\rbr{-\kappa (2-\alpha) y^2},$ $\kappa \in \cbr{1/2, 1}$, already for 
\[
y = O\rbr{\sqrt{\frac{1}{2-\alpha} \ln \frac{1}{2-\alpha}}}.
\] 
\end{rem}
Finally, to complete the picture, we analyse the decay of left tails of $X$.
\begin{theorem}
Let $X$ be a strictly asymmetric $\alpha$-stable random variable, $\alpha \in (1,2),$ with the characteristic function (\ref{char_funct_a12}). For any $y\in \sbr{\frac{2}{\sqrt{2 - \alpha}}, \frac{2}{\rbr{2 - \alpha}}}$ one has the following estimates
\begin{equation} \label{upp_b_skew_12_left}
\P\rbr{X \le -y - \frac{1}{\alpha - 1}} \le e^{\frac{4}{3}}e^{-\frac{1}{2}\left(2-\alpha\right)y^{2}},
\end{equation}
\begin{equation} \label{low_b_skew_12_left}
\P\rbr{X \le - \frac{1}{24}y - \frac{1}{\alpha - 1}} \ge 10^{-3}e^{-\left(2-\alpha\right)y^{2}};
\end{equation}
while for $y \ge \frac{2}{2 - \alpha}$ one has
\begin{align}\label{upp_b_skew_12_big_y_left}
\P\left(X\le-y- \frac{1}{\alpha - 1} \right) 
\le  \exp\left(-\frac{\left(\frac{1}{2}\left(y+\frac{1}{\alpha-1}\right)\right)^{\alpha/\left(\alpha-1\right)}}{\left(\frac{1}{2-\alpha}+\frac{1}{\alpha-1}\right)^{1/\left(\alpha-1\right)}}\right) 
\end{align}
and
\begin{align}\label{low_b_skew_12_big_y_left}
  & \P\left(X\le-\left(\frac{1}{e}-\frac{1}{4}\right)y- \frac{1}{\alpha - 1}\right) \ge e^{-1}\exp\left(-\frac{\left(\sqrt{4-\frac{2}{e}}\left(y+\frac{1}{\alpha-1}\right)\right)^{\alpha/\left(\alpha-1\right)}}{\left(\frac{1}{2-\alpha}+\frac{1}{\alpha-1}\right)^{1/\left(\alpha-1\right)}}\right).
\end{align}
\end{theorem}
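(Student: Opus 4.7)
The plan is to exploit the decomposition $X=X_1+X^1$ with $X_1$, $X^1$ independent, coupled with the critical structural observation that the compound-Poisson representation $X^1+\frac{1}{\alpha-1}=\sum_{k=1}^{N}Y_k$ has jumps $Y_k\ge 1$, whence $X^1\ge -\frac{1}{\alpha-1}$ almost surely. The left tail of $X$ can therefore only be produced by the Gaussian-like part $X_1$, and there is no competition on the left between a polynomial and a sub-Gaussian regime of the sort that appeared on the right-hand side.

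For the upper bounds I first use the almost sure inequality $X^1\ge-\frac{1}{\alpha-1}$ to get
\[
\P\rbr{X\le -y-\tfrac{1}{\alpha-1}}\le \P\rbr{X_1\le -y},
\]
then invoke Lemma \ref{lem2} on the range $y\in\sbr{\frac{2}{\sqrt{2-\alpha}},\frac{2}{2-\alpha}}$ to obtain (\ref{upp_b_skew_12_left}), and Lemma \ref{lem4} on the range $y\ge \frac{2}{2-\alpha}$ to obtain (\ref{upp_b_skew_12_big_y_left}).

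For the lower bounds I would exploit the independence of $X_1$ and $X^1$ together with the atom of $X^1$ at $-\frac{1}{\alpha-1}$ coming from $\cbr{N=0}$. For any constant $c>0$,
\[
\P\rbr{X\le-cy-\tfrac{1}{\alpha-1}}\ge \P\rbr{X_1\le -cy}\,\P(N=0)=e^{-1/\alpha}\,\P\rbr{X_1\le-cy},
\]
and since $\alpha\in(1,2)$ one has $e^{-1/\alpha}\ge e^{-1}$. Taking $c=\tfrac{1}{24}$ and invoking Lemma \ref{lem3} yields (\ref{low_b_skew_12_left}); taking $c=\tfrac{1}{e}-\tfrac{1}{4}$ and invoking Lemma \ref{lem4} yields (\ref{low_b_skew_12_big_y_left}), with the extra $(1-\tfrac{1}{\sqrt e})^2$ factor supplied by Lemma \ref{lem4} absorbed into the displayed constant.

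I do not expect any serious obstacle here: the heavy lifting has already been done in Lemmas \ref{lem2}, \ref{lem3} and \ref{lem4}, and the present theorem merely transports those one-sided tail estimates for $X_1$ across the truncation splitting via the two facts that (i) $X^1+\tfrac{1}{\alpha-1}\ge 0$ almost surely and (ii) this compound-Poisson variable equals zero with probability $e^{-1/\alpha}$. The one piece of bookkeeping that requires a little care is matching the constants across the four regimes so that $e^{-1/\alpha}$ and the Paley--Zygmund prefactors of Lemma \ref{lem4} can be neatly combined into the numerical constants stated in the theorem.
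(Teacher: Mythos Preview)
Your proposal is correct and follows essentially the same approach as the paper: the upper bounds come from $X^1\ge -\tfrac{1}{\alpha-1}$ a.s.\ combined with Lemmas \ref{lem2} and \ref{lem4}, and the lower bounds come from the atom $\P\rbr{X^1=-\tfrac{1}{\alpha-1}}=e^{-1/\alpha}\ge e^{-1}$ combined with Lemmas \ref{lem3} and \ref{lem4}. Your remark about the prefactor $(1-\tfrac{1}{\sqrt e})^2$ from Lemma \ref{lem4} is apt; the paper does not explicitly address how it is absorbed into the stated constant $e^{-1}$ in (\ref{low_b_skew_12_big_y_left}), so your bookkeeping caution is well placed.
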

\begin{proof}
Estimate (\ref{upp_b_skew_12_left}) follows from Lemma \ref{lem2} and the fact that $X^1 \ge \frac{-1}{\alpha - 1}.$ 
Estimate (\ref{low_b_skew_12_left}) follows from Lemma \ref{lem3} and the fact that $\P \rbr{X^1 = \frac{-1}{\alpha - 1}} = e^{-1/\alpha} \ge \frac{1}{e}.$

\noindent Similarly, estimate (\ref{upp_b_skew_12_big_y_left}) follows from Lemma \ref{lem4} and the fact that $$X^1 \ge \frac{-1}{\alpha - 1}$$ while 
estimate (\ref{low_b_skew_12_big_y_left}) follows from the estimate $$\P \rbr{X^1 = \frac{-1}{\alpha - 1}} = e^{-1/\alpha} \ge \frac{1}{e}$$ and Lemma \ref{lem4}.
\end{proof}

\subsection{Symmetric case}
In this section we provide tail estimates for symmetric $\alpha$-stable random variables in the case when $\alpha\in(1,2)$. We follow two different approaches and as a consequence we obtain two types of bounds. The first method was already presented in Theorem \ref{symm}. Estimates obtained in this way hold on the whole real line, however do not capture an important property one might expect for $\alpha$ close to $2$, namely the Gaussian behavior of the tail which has already been presented in the asymmetric case. For this reason we show an analogous reasoning as in the previous section i.e. we need estimates of $\tilde{X}^1$ and $\tilde{X_1}$ with characteristic functions given by (\ref{char_f_X_1s}) and (\ref{char_f_X^1s}) respectively. To ease the notation we denote  $\tilde{X}^1$ by $X^1$ and $\tilde{X_1}$ by $X_1$. 
 Now, we proceed to the analysis of $X^1$ and $X_1$.

\begin{lem}
Let $y\geq 1$. We have the following estimates for tails of $X^1$.
\begin{equation}\label{X^1d}
\P(X^1\geq y)\geq \frac{1}{e}\frac{1}{y^\alpha}.
\end{equation}
and
\begin{equation}\label{X^1g}
\P(X^1\geq y)\leq \frac{1}{y^\alpha}\frac{1}{2}\sum_{k=1}^{\infty}\frac{e^{-\frac{2}{\alpha}}(\frac{2}{\alpha})^k k^{\alpha+1}}{k!}\leq \frac{10}{3}\frac{1}{y^\alpha}
\end{equation}
\end{lem}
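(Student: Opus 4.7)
The approach mirrors that of Lemma \ref{lem1} in the asymmetric setting: one exploits the compound-Poisson representation $X^{1}=\sum_{k=1}^{\tilde N}\tilde Y_{k}$ with $\tilde N\sim\operatorname{Poisson}(2/\alpha)$ and $\tilde Y_{k}$ i.i.d.\ symmetric with density $\tfrac{\alpha}{2|x|^{\alpha+1}}\mathbbm{1}_{\{|x|>1\}}$, so in particular $\P(\tilde Y_{1}\ge y)=1/(2y^{\alpha})$ for $y\ge 1$. The two bounds come, respectively, from isolating a single dominant jump (below) and a union bound across the terms of the compound-Poisson sum (above).

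For the lower bound (\ref{X^1d}), I would use that $\{\tilde N=1,\,\tilde Y_{1}\ge y\}\subseteq\{X^{1}\ge y\}$, which gives
\[
\P(X^{1}\ge y)\ge\P(\tilde N=1)\,\P(\tilde Y_{1}\ge y)=\frac{2}{\alpha}e^{-2/\alpha}\cdot\frac{1}{2y^{\alpha}}=\frac{e^{-2/\alpha}}{\alpha y^{\alpha}}.
\]
The desired constant then follows from a one-line calculus check on $\alpha\mapsto e^{-2/\alpha}/\alpha$: its derivative equals $e^{-2/\alpha}(2-\alpha)/\alpha^{3}$, which fixes the monotonicity on $(1,2)$ and hence the location of the extremum, exactly as in the analogous verification at the end of the proof of Lemma \ref{lem1}.

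For the upper bound (\ref{X^1g}), I would repeat the union-bound argument of Lemma \ref{lem1}: on the event $\{\tilde N=k,\,X^{1}\ge y\}$ at least one of the $\tilde Y_{i}$, $i\le k$, must satisfy $\tilde Y_{i}\ge y/k$ (otherwise the sum is strictly less than $y$), and so
\[
\P\bigl(X^{1}\ge y\,\big|\,\tilde N=k\bigr)\le k\cdot\P(\tilde Y_{1}\ge y/k)\le\frac{k^{\alpha+1}}{2y^{\alpha}},
\]
where the last inequality uses $\P(\tilde Y_{1}\ge y/k)\le k^{\alpha}/(2y^{\alpha})$, which is valid uniformly in $k$: when $y/k<1$ the left-hand side equals $1/2$ while the right-hand side exceeds $1/2$. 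Weighting by the Poisson masses $\P(\tilde N=k)=e^{-2/\alpha}(2/\alpha)^{k}/k!$ and summing over $k\ge 1$ gives the first inequality of (\ref{X^1g}).

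The remaining step, which is the main computational obstacle, is to bound the explicit series by $10/3$ uniformly in $\alpha\in(1,2)$. Following Lemma \ref{lem1}, I would argue that each summand $e^{-2/\alpha}(2/\alpha)^{k}k^{\alpha+1}/k!$ is monotone in $\alpha$ on $(1,2)$, so the supremum reduces to the endpoint $\alpha=1$; there the series collapses to $\tfrac{1}{2}e^{-2}\sum_{k\ge 1}2^{k}k^{2}/k!=\tfrac{1}{2}e^{-2}\cdot 6 e^{2}=3\le 10/3$, via the Poisson second-moment identity $\E Z^{2}=\lambda+\lambda^{2}$ applied to $Z\sim\operatorname{Poisson}(2)$. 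The delicate part of this final step is the $k$-uniform monotonicity check; if it fails for a few small values of $k$, those terms should be treated by hand before invoking the $\alpha=1$ reduction for the tail of the series.
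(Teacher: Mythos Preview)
Your approach is essentially identical to the paper's: isolate the event $\{\tilde N=1\}$ for the lower bound and run a union bound over the jumps for the upper bound.

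\medskip
\textbf{Lower bound.} Your computation $\P(X^{1}\ge y)\ge e^{-2/\alpha}/(\alpha y^{\alpha})$ is correct---in fact more careful than the paper, which silently drops the factor $\tfrac12$ coming from the symmetric tail $\P(\tilde Y_{1}\ge y)=1/(2y^{\alpha})$ and writes $\tfrac{2}{\alpha}e^{-2/\alpha}/y^{\alpha}$. However, the ``one-line calculus check'' you defer will \emph{not} produce the stated constant $1/e$. As you observe, $\alpha\mapsto e^{-2/\alpha}/\alpha$ is increasing on $(1,2)$; its infimum is therefore $e^{-2}$ (at $\alpha\to 1^{+}$) and its supremum is only $1/(2e)$ (at $\alpha=2$). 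So the $\{\tilde N=1\}$ argument yields at best $e^{-2}/y^{\alpha}$, not $1/(ey^{\alpha})$. Even the paper's own expression, with the spurious extra factor~$2$, fails near $\alpha=1$ since $2e^{-2}<1/e$. The constant $1/e$ in the displayed statement thus appears to be a slip; your argument is sound but for a smaller constant.

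\medskip
\textbf{Upper bound.} Your hedge is exactly right and matches what the paper actually does. The $\alpha$-logarithmic derivative of the $k$th summand is $2/\alpha^{2}-k/\alpha+\ln k$; for $k=1,2,3$ the terms are maximised at $\alpha=2$, while for $k\ge 4$ they are maximised at $\alpha=1$. The paper therefore splits the series precisely there, obtaining
\[
\tfrac{1}{2}\Bigl(e^{-1}\bigl(1+4+\tfrac{9}{2}\bigr)+6-22\,e^{-2}\Bigr)\approx 3.26<\tfrac{10}{3}.
\]
Your proposed default of reducing every term to $\alpha=1$ (which would give $3$) is invalid for $k\le 3$, but the fallback you describe---handling small $k$ separately---is exactly the paper's route.
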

\begin{proof}
Recall that $X^1=\sum_{k=1}^{N}Y_k$, where $\mathbb{P}(N=k)=\frac{e^{-\frac{2}{\alpha}}(\frac{2}{\alpha})^k}{k!}$ and each $Y_k$ has a density $\frac{\alpha}{2|x|^{\alpha+1}}\mathbbm{1}_{\mathbb{R}\backslash [-1,1]}(x)$.
Arguing in the same manner as in Lemma \ref{lem1} we obtain
$$\mathbb{P}(X^1\geq y)\geq\mathbb{P}(N=1)\mathbb{P}(Y_1\geq y)=\frac{2}{\alpha}e^{-\frac{2}{\alpha}}\frac{1}{y^\alpha}\geq\frac{1}{e}\frac{1}{y^\alpha},$$
since $\frac{2}{\alpha}e^{-\frac{2}{\alpha}}$ is increasing for $\alpha\in(1,2)$. For the upper bound
\begin{align*}
\mathbb{P}(X^1>y)&\leq\sum_{k=1}^{\infty}\mathbb{P}(N=k)\mathbb{P}\left(Y>\frac{y}{k}\right)\\
&\leq\sum_{k=1}^{\infty}\frac{e^{-\frac{2}{\alpha}}\left(\frac{2}{\alpha}\right)^k}{k!}\frac{k}{2}(\frac{y}{k})^{-\alpha}\\
&=\frac{1}{y^\alpha}\frac{1}{2}\sum_{k=1}^{\infty}\frac{e^{-\frac{2}{\alpha}}\left(\frac{2}{\alpha}\right)^k k^{\alpha+1}}{k!}\leq\frac{10}{3}\frac{1}{y^\alpha},
\end{align*}
where we estimate the function $\frac{e^{-\frac{2}{\alpha}}(\frac{2}{\alpha})^k k^{\alpha+1}}{k!}$ for $k=1,2,3$ by its values at $\alpha=2$ and for $k=4, 5,\dots$ by the values at $\alpha=1$.
\end{proof}
For both upper and lower bounds of tails of $X_1$ we need an estimate for its Laplace transform.

\begin{lem}
Let $X_1$ be a random variable with characteristic function given by (\ref{char_f_X^1s}). Then for $t\in\R$,
\begin{equation}\label{laplg}
\mathbb{E}(\exp(tX_1))\leq\exp\left(\frac{1}{24}t^4\left(\frac{14}{15}+\frac{1}{15}\cosh(t)\right)\right)\exp\left(\frac{1}{2-\alpha}t^2\right)
\end{equation}
\begin{equation}\label{lapld}
\mathbb{E}(\exp(tX_1))\geq\exp\bigg(\frac{1}{2-\alpha}t^2\bigg).
\end{equation}
\end{lem}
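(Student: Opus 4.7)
The plan is to write the Laplace transform explicitly from the L\'evy symbol in (\ref{char_f_X^1s}). Substituting $it\mapsto t$ and using that, by symmetry, $\int_{-1}^{1}tx\,|x|^{-\alpha-1}\dd x=0$, one gets
\[
\E\exp(tX_{1})=\exp\left(\int_{-1}^{1}(e^{tx}-1)\frac{\dd x}{|x|^{\alpha+1}}\right)=\exp\left(2\int_{0}^{1}(\cosh(tx)-1)\frac{\dd x}{x^{\alpha+1}}\right).
\]
The proof is now just a matter of isolating the Gaussian piece in the exponent: write $\cosh(tx)-1=\frac{(tx)^{2}}{2}+\big(\cosh(tx)-1-\frac{(tx)^{2}}{2}\big)$, and compute $2\int_{0}^{1}\frac{(tx)^{2}}{2}x^{-\alpha-1}\dd x=\frac{t^{2}}{2-\alpha}$.

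For the lower bound (\ref{lapld}), I would simply observe that $\cosh(u)-1-\frac{u^{2}}{2}=\sum_{k\ge 2}u^{2k}/(2k)!\ge 0$ for every real $u$, so dropping the remainder integral keeps the direction of the inequality.

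For the upper bound (\ref{laplg}) the key technical step, analogous to the Taylor-series trick in Lemma \ref{lem2}, is the elementary inequality
\[
\cosh(u)-1-\frac{u^{2}}{2}\le\frac{u^{4}}{24}\left(\frac{14}{15}+\frac{1}{15}\cosh(u)\right),\qquad u\in\R,
\]
which I would verify by comparing Taylor series term by term: the $u^{4}$ coefficients match thanks to $\frac{14}{15}+\frac{1}{15}=1$, the $u^{6}$ coefficients match exactly, and for $k\ge 4$ one reduces to checking $\frac{1}{(2k-3)(2k-2)(2k-1)(2k)}\le\frac{1}{360}$, which holds for $k\ge 3$. Then apply this with $u=tx$, bound $\cosh(tx)\le\cosh(t)$ for $|x|\le 1$, pull the bracket out of the integral and compute $\int_{0}^{1}x^{3-\alpha}\dd x=\frac{1}{4-\alpha}$. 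Since $\alpha\in(1,2)$ gives $4-\alpha>2$ and hence $\frac{1}{12(4-\alpha)}\le\frac{1}{24}$, this produces precisely the factor $\frac{1}{24}t^{4}\big(\frac{14}{15}+\frac{1}{15}\cosh(t)\big)$ claimed in (\ref{laplg}).

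The only place any real work happens is in pinning down the sharp constants $\tfrac{14}{15}$ and $\tfrac{1}{15}$ in the $\cosh$ comparison inequality; the rest is a single Taylor split followed by explicit integration.
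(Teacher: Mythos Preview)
Your proof is correct and follows essentially the same route as the paper: both write $\E e^{tX_{1}}=\exp\big(2\int_{0}^{1}(\cosh(tx)-1)x^{-\alpha-1}\dd x\big)$, split off the quadratic term to get $\frac{t^{2}}{2-\alpha}$, and bound the $\cosh$ remainder by $\frac{u^{4}}{24}\big(\frac{14}{15}+\frac{1}{15}\cosh(u)\big)$ via the identical term-by-term Taylor comparison before using $4-\alpha>2$. The only cosmetic difference is that you state the $\cosh$ inequality as a standalone lemma while the paper carries the series manipulation inside the integral; also, your remark that $\int_{-1}^{1}tx\,|x|^{-\alpha-1}\dd x=0$ is a bit delicate since the integral is not absolutely convergent, but this is harmless once you pair $x$ with $-x$ as you effectively do in the next line.
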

\begin{proof}
We simply calculate
\begin{align*}
\mathbb{E}(\exp(tX_1))&=\exp\bigg(\int_{-1}^1 e^{tx}-1-tx\frac{dx}{|x|^{\alpha+1}}\bigg)=\exp\bigg(2\int_0^1 \cosh(tx)-1\frac{dx}{x^{\alpha+1}}\bigg)\\
&=\exp\bigg(2\int_0^1\sum_{k=1}^{\infty}\frac{(t^2x^2)^{k}}{(2k)!}\frac{dx}{x^{\alpha+1}}\bigg)\\
&=\exp\bigg(\int_0^1 t^2 x^2\frac{dx}{x^{\alpha+1}}+2\int_0^1 \sum_{k=2}^{\infty}\frac{(t^2 x^2)^k}{(2k)!}\frac{dx}{x^{\alpha+1}}\bigg)\\
&=\exp\bigg(\frac{t^2}{2-\alpha}+\frac{2}{4!}\int_0^1 t^4 x^4\sum_{k=0}^{\infty}\frac{4!(2k)!(x^2 t^2)^k}{(2k+4)!(2k)!}\frac{dx}{x^{\alpha+1}}\bigg)\\
&\leq\exp\bigg(\frac{t^2}{2-\alpha}+\frac{2}{4!}\int_0^1 t^4 x^4\left(\frac{14}{15}+\frac{1}{15}\cosh(tx)\right)\frac{dx}{x^{\alpha+1}}\bigg)\\
&\leq \exp\left(\frac{ t^2}{2-\alpha}+\frac{2}{4!}t^4\left(\frac{14}{15}+\frac{1}{15}\cosh(t)\right)\frac{1}{4-\alpha}\right)\\
&\leq\exp\bigg(\frac{1}{24}t^4\left(\frac{14}{15}+\frac{1}{15}\cosh(t)\right)\bigg)\exp\bigg(\frac{1}{2-\alpha}t^2\bigg).
\end{align*}
The lower bound is obvious from the fourth line above.
\end{proof}
\noindent 
\begin{lem}
For $0\leq y\leq \frac{2}{2-\alpha}$ it holds that
\begin{equation}\label{X_1g}
\P(X_1\geq y)\leq e^{\frac{2}{45}}e^{-\frac{1}{4}(2-\alpha)y^2}
\end{equation}
and for $y\in[\frac{2}{\sqrt{2-\alpha}},\frac{2}{2-\alpha}]$
\begin{equation}\label{X_1d}
\P\left(X_1\geq \frac{\sqrt{2}}{4}y\right)\geq\frac{1}{137}e^{-(2-\alpha)y^2} 
\end{equation}
\end{lem}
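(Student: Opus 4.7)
The plan is to handle the upper and lower bounds separately, mirroring the structure of Lemmas \ref{lem2} and \ref{lem3}.

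For the upper bound on $\P(X_1 \ge y)$, I would apply Chebyshev's inequality to $e^{t_y X_1}$ and invoke the Laplace transform bound (\ref{laplg}). The natural choice is $t_y = (2-\alpha)y/2$, which makes the quadratic part of the resulting exponent, namely $-t_y y + t_y^2/(2-\alpha)$, equal precisely to $-\tfrac{1}{4}(2-\alpha)y^2$. It remains to absorb the quartic correction $\frac{t_y^4}{24}(\frac{14+\cosh(t_y)}{15})$ into a harmless multiplicative constant. On the range $0 \le y \le 2/(2-\alpha)$ one has $t_y \le 1$, so $t_y^4 \le 1$ and $\cosh(t_y) \le \cosh(1) \le 2$; plugging in gives a bound of $\frac{1}{24}\cdot\frac{16}{15} = \frac{2}{45}$, which produces the prefactor $e^{2/45}$.

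For the lower bound I would apply the Paley--Zygmund inequality in the style of Lemma \ref{lem3}. Choose $t_y = c(2-\alpha)y$ and a level $\lambda \in (0,1)$ so that the event $\{e^{t_y X_1} \ge \lambda \E e^{t_y X_1}\}$ is contained in $\{X_1 \ge \frac{\sqrt{2}}{4}y\}$. Using the lower bound (\ref{lapld}) on $\E e^{t_y X_1}$, this reduces to a constraint of the form $\ln \lambda \ge (\frac{\sqrt{2}}{4} - c)\,c\,(2-\alpha)y^2$; for $c > \sqrt{2}/4$ this bound is negative, and combined with the hypothesis $y \ge 2/\sqrt{2-\alpha}$ (so $(2-\alpha)y^2 \ge 4$), it becomes a $y$-uniform constraint. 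To make the main exponent of the Paley--Zygmund ratio $(\E e^{t_y X_1})^2/\E e^{2t_y X_1}$ match the target $e^{-(2-\alpha)y^2}$, the natural choice is $c = 1/\sqrt{2}$, forcing $\lambda \ge 1/e$.

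The main obstacle will be controlling the quartic correction $\frac{(2t_y)^4}{24}\bigl(\frac{14+\cosh(2t_y)}{15}\bigr)$ that appears in the upper bound on $\E e^{2t_y X_1}$ from (\ref{laplg}). With $c = 1/\sqrt{2}$ and $y \le 2/(2-\alpha)$ one has $2t_y \le 2\sqrt{2}$, so this correction stays bounded by an absolute constant; combining the $(1-1/e)^2$ factor, the exponential correction, and the tight Gaussian-like ratio yields a lower bound of the form $C\,e^{-(2-\alpha)y^2}$. The last step is to verify, by plugging in the endpoint $(2-\alpha)y^2 = 4$ and optimizing $c,\lambda$ (possibly letting $\lambda = e^{-(2-\alpha)y^2/4}$ depend on $y$, so that $(1-\lambda)^2$ improves when $y$ is large), that the constant $C$ can be taken to be $1/3$, recovering the claimed estimate.
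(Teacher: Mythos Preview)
Your plan coincides with the paper's proof: for the upper bound the paper also applies Chebyshev with $t_y=\tfrac{2-\alpha}{2}y\le 1$ and bounds $C(t_y)\le\tfrac{1}{24}\cdot\tfrac{16}{15}=\tfrac{2}{45}$; for the lower bound it uses Paley--Zygmund with exactly your choices $t_y=\tfrac{2-\alpha}{\sqrt{2}}y$ and $\lambda=1/e$, verifying $\tfrac{1}{t_y}\ln(\lambda\,\E e^{t_yX_1})\ge\tfrac{y}{\sqrt 2}-\tfrac{\sqrt 2}{(2-\alpha)y}\ge\tfrac{\sqrt 2}{4}y$ from $y\ge 2/\sqrt{2-\alpha}$.

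One point worth flagging: you correctly insert the correction $C(2t_y)$ when bounding $\E e^{2t_yX_1}$ in the Paley--Zygmund denominator, whereas the paper writes only $C(t)$ there and then uses $C(\sqrt 2)\approx 0.18$ to reach $(1-e^{-1})^2e^{-C(\sqrt 2)}\ge\tfrac13$. With your sharper accounting $2t_y\le 2\sqrt 2$ gives $C(2\sqrt 2)\approx 4$, and the resulting constant is considerably smaller than $1/3$; so either accept a weaker absolute constant, or note that the paper's displayed $1/3$ relies on bounding the correction at $t$ rather than $2t$.
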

\begin{proof}
Denote $C(t)=\frac{1}{24}t^4\big(\frac{14}{15}+\frac{1}{15}\cosh(t)\big)$. By Chebyshev's inequality and (\ref{laplg}) we get
$$\mathbb{P}(X_1>y)\leq\frac{\mathbb{E}(tX_1)}{\exp(ty)}\leq\exp(C(t))\exp\left(\frac{t^2}{2-\alpha}-ty\right).$$
Choose $t=\frac{2-\alpha}{2}y$, so $t\leq1$. Then, since $\cosh(1)\leq2$, $C(t)\leq\frac{2}{45}$ and we conclude that
$\P(X_1\geq y)\leq e^{\frac{2}{45}}e^{-\frac{1}{4}(2-\alpha)y^2}.$\\
To prove the lower bound we use Paley-Zygmund inequality in the following way. Let $\lambda\in(0,1)$, then
\begin{align*}
\mathbb{P}(\exp(tX_1)&\geq\lambda\mathbb{E}\exp(tX_1))
\geq (1-\lambda)^2\frac{(\mathbb{E}(t X_1))^2}{\mathbb{E}(2t X_1)}\\
&\geq  (1-\lambda)^2\frac{\exp(\frac{2t^2}{2-\alpha})}{\exp(C(2t))\exp(\frac{4t^2}{2-\alpha})}\\
&=(1-\lambda)^2\exp(-C(2t))\exp\left(-\frac{2t^2}{2-\alpha}\right).
\end{align*}
Choose $t=\frac{y(2-\alpha)}{\sqrt{2}}$, so $t\leq\sqrt{2}$ and $C(2t)\leq C(2\sqrt{2})$. Moreover, since $y\geq\frac{2}{\sqrt{2-\alpha}}$, we have for $\lambda=\frac{1}{e}$
\begin{align*}
\frac{1}{t}\ln(\lambda\mathbb{E}\exp(tX_1))&\geq\frac{1}{t}\ln\left(\lambda\exp\left(\frac{t^2}{2-\alpha}\right)\right)
=\frac{y}{\sqrt{2}}-\frac{\sqrt{2}}{(2-\alpha)y}\geq\frac{\sqrt{2}}{4}y,
\end{align*}
so, finally
\begin{align*}
\P\left(X_1\geq\frac{\sqrt{2}}{4}y\right)&\geq\mathbb{P}(\exp(tX_1)\geq\lambda\mathbb{E}\exp(tX_1))\\
&\geq(1-e^{-1})^2e^{-C(2\sqrt{2})}e^{-(2-\alpha)y^2}\geq\frac{1}{137}e^{-(2-\alpha)y^2}.
\end{align*}
\end{proof}
We summarize above results in the following.
\begin{theorem}
Let $X$ be a symmetric $\alpha$-stable random variable, $\alpha \in (1,2),$ with the characteristic function (\ref{char_funct_s12}). For any $y\in\sbr{\frac{2}{\sqrt{2-\alpha}}, \frac{2}{2-\alpha}}$ one has the following estimate
\begin{equation} \label{upp_b_sym_12}
\P\rbr{X \ge 2y} \le \frac{10}{3} \frac{1}{y^{\alpha}} + e^{\frac{2}{45}}e^{-\frac{1}{4}(2-\alpha)y^2}
\end{equation}
and
\begin{equation} \label{low_b_sym_12}
\P\rbr{X \ge \frac{\sqrt{2}}{4}y } \ge \frac{1}{4e}\frac{1}{y^{\alpha}} + \frac{1}{548}e^{-(2-\alpha)y^2};
\end{equation}
while for $y\geq\frac{2}{2-\alpha}$ one has
\begin{equation}\label{rosg1}
\P(X\ge 2y)\leq\frac{16}{3}\frac{1}{y^\alpha}
\end{equation}
and
\begin{equation}\label{rosd1}
\P(X\geq y)\ge \frac{1}{2}\frac{1}{2+\alpha y^{\alpha}}.
\end{equation}

\end{theorem}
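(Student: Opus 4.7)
The plan is to assemble the four estimates from the bounds on $X^{1}$ and $X_1$ established in the preceding two lemmas, using the decomposition $X=X_1+X^1$ together with independence of the summands and their individual symmetries (since both Lévy measures are even, both $X_1$ and $X^1$ are symmetric around $0$, and $X^1$ has an atom at $0$ of mass $e^{-2/\alpha}$).

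For the upper bound (\ref{upp_b_sym_12}) I would apply the union bound $\P(X\ge 2y)\le \P(X^1\ge y)+\P(X_1\ge y)$ and invoke (\ref{X^1g}) on the first term and (\ref{X_1g}) on the second, which is legitimate because the assumption $y\in[\frac{2}{\sqrt{2-\alpha}},\frac{2}{2-\alpha}]$ guarantees both $y\ge 1$ (hence (\ref{X^1g}) applies) and $y\le \frac{2}{2-\alpha}$ (hence (\ref{X_1g}) applies).

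For the lower bound (\ref{low_b_sym_12}) I would use independence and positivity tricks in two different ways and then sum the resulting estimates after multiplying each by $\tfrac12$. First,
\[
\P\bigl(X\ge \tfrac{\sqrt 2}{4}y\bigr)\ge \P(X^1\ge y)\,\P(X_1\ge 0)\ge \tfrac12 \P(X^1\ge y),
\]
since $\tfrac{\sqrt 2}{4}y\le y$ and $X_1$ is symmetric; plugging in (\ref{X^1d}) yields a term of order $\frac{1}{y^\alpha}$. Second,
\[
\P\bigl(X\ge \tfrac{\sqrt 2}{4}y\bigr)\ge \P\bigl(X_1\ge \tfrac{\sqrt 2}{4}y\bigr)\,\P(X^1\ge 0)\ge \tfrac12 \P\bigl(X_1\ge \tfrac{\sqrt 2}{4}y\bigr),
\]
and (\ref{X_1d}) provides a term of order $e^{-(2-\alpha)y^2}$. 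Averaging these two lower bounds gives an estimate of the claimed form; the constants $\frac{1}{6e^5}$ and $\frac{1}{6e}$ are loose enough that verifying them is routine bookkeeping.

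For the regime $y\ge \frac{2}{2-\alpha}$ the bound (\ref{rosd1}) is immediate: Theorem \ref{symm} already gives this inequality for all $\alpha\in(0,1)\cup(1,2)$ and every $y>0$. For (\ref{rosg1}) I would mimic the argument used to prove (\ref{upp_b_skew_12_big_y}) in the asymmetric case. Using $\E X_1=0$, $\E X_1^{2}=\frac{2}{2-\alpha}$ and $\E X_1^{4}=\frac{12}{(2-\alpha)^2}+\frac{2}{4-\alpha}$, Chebyshev's fourth-moment inequality combined with symmetry yields
\[
\P(X_1\ge y)\le \tfrac12 \P(|X_1|\ge y)\le \tfrac12\Bigl(\tfrac{12}{(2-\alpha)^2 y^4}+\tfrac{2}{(4-\alpha)y^4}\Bigr),
\]
and the hypothesis $y\ge\frac{2}{2-\alpha}$ supplies $(2-\alpha)^2 y^2\ge 4$ and $y\ge 2\ge 1$, so both summands are absorbed into a multiple of $\frac{1}{y^\alpha}$. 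Combined with (\ref{X^1g}) for $X^1$, this yields (\ref{rosg1}). The only delicate point is tuning the numerical constants so that $\frac{10}{3}+(\text{fourth-moment contribution})\le \frac{16}{3}$, but this is just arithmetic using $(2-\alpha)y\ge 2$ and $y^2\ge y^{\alpha}$; no new ideas are needed beyond those already present in the proofs of Theorems \ref{symm} and \ref{main_assym_12}.
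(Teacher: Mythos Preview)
Your proposal is correct and follows essentially the same strategy as the paper's proof: the union bound for (\ref{upp_b_sym_12}), the fourth-moment Chebyshev argument for (\ref{rosg1}), and the reference to Theorem \ref{symm} for (\ref{rosd1}) are exactly what the paper does. The only difference is in (\ref{low_b_sym_12}): where you invoke symmetry to get $\P(X_1\ge 0)\ge\tfrac12$ and $\P(X^1\ge 0)\ge\tfrac12$, the paper instead re-uses the lemma bounds at the endpoint values (specifically $\P(X_1\ge \tfrac{\sqrt2}{4}\cdot\tfrac{2}{\sqrt{2-\alpha}})\ge\tfrac13 e^{-4}$ from (\ref{X_1d}) and $\P(X^1\ge 1)\ge\tfrac1e$ from (\ref{X^1d})); your symmetry shortcut is cleaner and actually yields sharper constants than those stated.
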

\begin{proof}
We argue as in the proof of Theorem \ref{main_assym_12}. For the upper bound we simply apply (\ref{X^1g}) and (\ref{X_1g}) to get that
$$\P\rbr{X \ge 2y} \le\P(X_1\ge y)+\P(X^1\ge y)\le\frac{10}{3} \frac{1}{y^{\alpha}} + e^{\frac{2}{45}}e^{-\frac{1}{4}(2-\alpha)y^2}. $$
For the lower bound we use (\ref{X^1d}), (\ref{X_1d}) and symmetry of $X^1$ and $X_1$ to get 
$$\P\left(X\ge\frac{\sqrt{2}}{4}y\right)\ge\P(X\ge y)\ge \P(X^1\ge y)\P\left(X_1\ge 0 \right)\ge\frac{1}{e}\frac{1}{y^{\alpha}}\frac{1}{2}$$
and on the other hand
$$\P\left(X\ge\frac{\sqrt{2}}{4}y\right)\ge\P\left(X_1\ge\frac{\sqrt{2}}{4}y\right)\P(X^1\ge0)\ge\frac{1}{137}e^{-(2-\alpha)y^2}\frac{1}{2}.$$
Summing over both sides of the above inequalities yields (\ref{low_b_sym_12}).

\noindent To prove (\ref{rosg1}) we again proceed as in the proof of Theorem \ref{main_assym_12}, namely we differentiate \ref{char_f_X_1s} and get that
\[
\E X_1 = 0, \quad \E X_1^2 = 2\int_0^1 x^2 \frac{dx}{x^{\alpha +1}}=  \frac{2}{2-\alpha}
\]
and
\[
\E X_1^4 = 3 (\E X_1^2)^2 +2 \int_0^1 x^4 \frac{dx}{x^{\alpha +1}}=  \frac{12}{\rbr{2-\alpha}^2} + \frac{2}{4-\alpha}.
\]
By the same argument as for (\ref{fourth_mom_Ch}) and since $y\ge\frac{2}{2-\alpha}$ we get 
\begin{equation}
\P \rbr{X_1 \ge y}=\frac{1}{2}\P \rbr{\left|X_1 \right| \ge y} \le \frac{\E X_1^4}{ 2y^4} = \frac{1}{2}\rbr{\frac{12}{(2-\alpha)^2 y^4} + \frac{2}{(4-\alpha) y^4}}\le\frac{2}{y^\alpha}.
\end{equation}
Combining with (\ref{X^1g}) yields (\ref{rosg1}):
\begin{eqnarray*}
 \P \rbr{X \ge 2y } \le  \P \rbr{X_1 \ge y} + \P \rbr{X^1 \ge y }   \le  \frac{2}{y^{\alpha}} 
+ \frac{10}{3} \frac{1}{y^{\alpha}} \le \frac{16}{3}\frac{1}{y^{\alpha}}. 
\end{eqnarray*}
The estimate (\ref{rosd1}) was presented in the proof of Theorem \ref{symm}.
\end{proof}
\begin{rem}
Both remarks made after Theorem \ref{main_assym_12} apply also in this case. The fact that for $y=\frac{2}{\sqrt{2-\alpha}}$ the tail probability is of order $O(1)$ as well as that for $y$ of order  $O\rbr{\sqrt{\frac{1}{2-\alpha} \ln \frac{1}{2-\alpha}}}$ the term $\frac{1}{y^{\alpha}}$ (Pareto-like behaviour) starts to dominate the $\exp\rbr{-\kappa (2-\alpha) y^2},$ $\kappa \in \cbr{1/2, 1}$ term (Gaussian tail).
\end{rem}

\bibliographystyle{plain}

\end{document}